\documentclass[11pt]{article}
\usepackage{graphicx}
\usepackage{diagbox}
\usepackage[normalem]{ulem}
\usepackage[margin=1truein]{geometry}
\usepackage{setspace}
\usepackage{amsmath,amsfonts,amssymb,amsthm,mathtools}  
\usepackage{enumerate}   
\usepackage[ruled,vlined,boxruled]{algorithm2e}
\usepackage{algorithmicx}
\usepackage[font=small,labelfont=bf]{caption}
\SetKwInOut{Input}{Input}
\SetKwInOut{Output}{Output}
\usepackage{autobreak}
\usepackage{float}
\usepackage{indentfirst}
\usepackage{etoolbox}
\usepackage{url}
\usepackage[colorlinks,linkcolor=red,citecolor={blue}]{hyperref}
\usepackage[all]{hypcap}
\hypersetup{hypertexnames=false}
\usepackage[square,numbers]{natbib}
\def\X{\mathbb{X}}
\def\A{\mathbb{A}}
\def\R{\mathbb{R}}
\def\N{\mathbb{N}}

\def\F{\mathbb{F}}

\newtheorem{thm}{Theorem}[section]

\newtheorem{lem}[thm]{Lemma}
\newtheorem{cor}[thm]{Corollary}
\newtheorem{rem}[thm]{Remark}

\DeclarePairedDelimiter\abs{\lvert}{\rvert}
\DeclarePairedDelimiterX{\norm}[1]{\lVert}{\rVert}{#1}
\DeclarePairedDelimiterX{\normo}[1]{\lVert}{\rVert_*}{#1}
\makeatletter

\numberwithin{equation}{section}
\allowdisplaybreaks[4]

\begin{document}
	\title{Differential Properties of Objective and Value Functions for Discounted Markov Decision Processes with Finite State and Action Sets}
    \author{Eugene A. Feinberg\thanks{Corresponding author, Department of Applied Mathematics and
Statistics, Stony Brook University, Stony Brook, NY 11794-3600, USA, eugene.feinberg@stonybrook.edu, https://orcid.org/0000-0002-8263-0772}       
    \and    
    Gaojin He\thanks{School of Engineering and Sciences,
        Tecnológico de Monterrey, Mexico, gaojin.he@tec.mx}}
	\date{\today}
	\maketitle
	\begin{abstract}
		This paper studies differential properties of objective functions for discrete-time discounted Markov decision processes (MDPs) with finite state and action sets, when differentiation is considered with respect to the discount factor. Given a fixed discount factor for an MDP, we estimate the norms and seminorms of the objective functions, their derivatives, and the value function at a different discount factor. We then use these estimates to prove that objective functions, their derivatives, and value functions are Lipschitz continuous in discount factors. As a result, if an MDP is solved with a fixed discount factor, then we can estimate the value function at a different discount factor without solving it again.\\ 
		
		\noindent\begin{keywords}
			Markov Decision Process, Value Functions, Objective Functions, Norm, Seminorm, Lipschitz Continuity.
		\end{keywords}
	\end{abstract}

\section{Introduction}
   
   This paper deals with infinite-horizon discounted Markov decision processes (MDPs) with finite state and action sets. We investigate the properties of objective functions, derivatives of objective functions, and value functions of finite MDPs in terms of discount factors, and use them to derive differential results of objective functions and value functions regarding monotonicity, estimation, and Lipschitz continuity. As a result, if an MDP has already been solved for a fixed discount factor, then these results provide fast ways to estimate the value function at a different discount factor without solving the MDP again. 
   
   Section~\ref{secp} deals with preliminary results that are used in the proofs of the main results in Sections~\ref{seces} and \ref{seclip}. Section~\ref{seces} provides estimations of objection functions and value functions in discount factors. Theorem~\ref{diffb} provides bounds of higher-order derivatives of objective functions in their lower-order derivatives. Theorems~\ref{mon} and \ref{Vine} shows the monotonicity of some quantities related to the norms and the seminorms of objective functions, their derivatives, and value functions. As a result, Corollaries~\ref{vb}, \ref{ratiob}, \ref{valuein}, \ref{V0} provide different kinds of estimates of the norms and the seminorms of objective functions, their derivatives, and value functions in discount factors. Section~\ref{seclip} provides results regarding Lipschitz continuity of objective functions and value functions in discount factors. Theorem~\ref{olip} provides Lipschitz continuity of objective functions and their derivatives. Theorem~\ref{Vlip} provides Lipschitz continuity of value functions, which strengthens the result in \cite[Lemma 4.2]{TPa} and \cite[Theorem 2]{NIPS}. 
   
   Some secondary results are also worth mentioning. Corollary~\ref{diff0} shows that, for a deterministic policy, if  the norm or the seminorm of the objective function at one discount factor is $0,$ then this norm or seminorm is $0$ for all discount factors. Corollary~\ref{v0} shows that this property also applies to value functions. Corollary~\ref{Value0}(b) implies that, for an irregular discount factor at which the set of optimal policies changes, the value function vector cannot have all entries being identical. Some of these results appeared in \cite{OE} without proofs, and they imply the main results on the optimality extension in \cite{OE}. 
   
   We also generalized some previously published results. It turns out that \cite[Proposition 2.2]{TPa}, \cite[Proposition 6.6.1]{Put}, and \cite[Theorem 2.1]{TPa} are the special cases of zero discount factor of Lemmas~\ref{la}, \ref{lemQ}, and Corollary~\ref{lemeq1} in this paper, respectively.

\section{Definitions and Preliminary Results}\label{secp}
Let $\N,$ $\N^+,$  $\R,$ and $\R^{i\times j}$ be the sets of natural numbers including $0$, positive integers,  real numbers, and matrices with $i$ rows and $j$ columns, respectively, where $i,j\in \N^+.$ We also denote by $\R^j:=\R^{1\times j}$ the set of $j$-dimensional column vectors.

We consider a Markov decision process (MDP) with a finite state space $\X$ consisting of $m$ states $\X:=\{x^i\}_{i=1}^m$ and a finite action space $\A:=\bigcup_{x\in\X}A_x$, where $A_x\ne\emptyset,$ and $A_x$ is the set of available actions at each state $x\in\X$. Each action set $A_x$ consists of $q_x$ actions. Thus, the total number of actions is $q:=\sum_{x=1}^m q_x,$ and this number can be interpreted as the total number of state-action pairs. For each $x\in\X$ and $a\in A_x$ there is a \textbf{one-step reward} $r(x,a)$. Starting from some initial state $x\in\X$, a decision-maker selects an action and collects the corresponding reward, and the state is moved to the next state $y\in\X$ with a probability distribution $p(y|x,a).$ This process continues over an infinite planning horizon. A \textbf{decision rule} is a mapping $\phi:\X\to\A$ such that $\phi(x)\in A_x$ for each $x\in\X$, and a \textbf{Markov policy} $\pi$ is a sequence of decision rules $(\phi_0,\phi_1,\ldots)$, which means the decision maker applies $\phi_n$ as the decision rule at the $n$-th step. There are more general decision rules and policies, but for finite-state MDPs with expected total rewards, it is sufficient to consider the policies in this form; see e.g., \cite[p.~154]{Put}. Therefore, in this paper we only consider Markov policies. A policy $\pi$ is \textbf{deterministic} if $\pi=(\phi,\phi,\ldots).$ Sometimes such policies are called stationary or nonrandomized stationary. We denote such policies by $\phi$ and identify a decision rule $\phi$ with a deterministic policy $\phi.$ Throughout this paper, we do not distinguish between decision rules and deterministic policies. The term ``decision rule" is used when ``one-step policy" is emphasized, while the term ``deterministic policy" refers to the policy that applies the same one-step policy at all steps. The objective is to find a policy maximizing the expected discounted total reward. We denote the set of all decision rules by $\F,$ which is also the set of deterministic policies. Let $\alpha\in[0,1)$ be a \textbf{discount factor}. The total $\alpha$-discounted expected rewards for an MDP under the initial state $x$ and the policy $\phi,$ which is also called the \textbf{objective function} of the policy $\phi,$ is
    \begin{equation}\label{eqexpectation}
        \begin{aligned}
			v_\alpha^{\phi}(x) & :=\mathbb{E}_x^\phi\left[\sum_{t=0}^{\infty}\alpha^tr(x_t,\phi(x_t))\right],
		\end{aligned}
    \end{equation}
where $x_0,\phi_0(x_0),x_1,\phi_1(x_1),\ldots,x_{t-1},\phi_{t-1}(x_{t-1}),x_t,\ldots$ is a trajectory of the process, $t$ is the integer time parameter, and $\mathbb{E}_x^\phi$ is the expectation defined by the initial state $x$ and under the policy $\phi$. A policy $\phi$ is \textbf{optimal} for an infinite-horizon problem, if $v_\alpha^\phi(x)=\sup_{\phi\in\F}v_\alpha^\phi(x)$ for all $x\in\X$. It is well-known that an optimal policy exists; see e.g., \cite[p.~154]{Put}. The \textbf{value function} is defined as $V_\alpha(x):=\max_{\phi\in\F}v_\alpha^\phi(x),\ x\in\X.$ Denote by $v_\alpha^\phi$ and $V_\alpha$ the corresponding column vectors in $\R^m.$ For each $\phi\in\F$, let $P_{i,j}(\phi):=p(x^j|x^i,\phi(x^i))$ for $x^i,x^j\in\mathbb{X}$ and $r(\phi)(x):=r(x,\phi(x))$ for $x\in\mathbb{X}$. Denote by $P(\phi):=\left[P_{i,j}(\phi)\right]_{1\le i,j\le m}\in\R^{m\times m}$ the stochastic matrix and by $r(\phi)$ the column vector of one-step rewards under the decision rule $\phi$. 
For $\alpha\in[0,1)$ and $x\in\X,$ let 
\[D(\alpha):=\left\{\phi\in\F:V_\alpha=v_\alpha^\phi\right\},\qquad A_x(\alpha):=\left\{a\in A_x:a=\phi(x)\ \text{for some $\phi\in D(\alpha)$}\right\}.\]
We view $D(\alpha)$ and $A_x(\alpha)$ as set-valued functions in $\alpha$. For $\alpha\in[0,1),$ $D(\alpha)$ is the set of deterministic optimal policies, and $A_x(\alpha)$ is the set of \textbf{optimal actions} at state $x.$ It is well-known that $D(\alpha)=\{\phi\in\F:\phi(x)\in A_x(\alpha)\  \forall x\in\X\}.$ For $E\subset[0,1),$ let $D(E):=\bigcup_{\alpha\in E}D(\alpha).$ If $E\cap[0,1)=\emptyset,$ then we set $D(E)=\emptyset.$ For $\alpha\in[0,1),$ let
\[D(\alpha-) := \bigcap_{i=1}^\infty D\left(\left(\alpha-\frac{1}{i},\alpha\right)\bigcap\left[0,1\right)\right),\qquad D(\alpha+) := \bigcap_{i=1}^\infty D\left(\left(\alpha,\alpha+\frac{1}{i}\right)\bigcap\left[0,1\right)\right).\]
By this definition, $D(0-)=\emptyset.$ We note that $D(\alpha-),$ $D(\alpha+)$ are called the set of left optimal policies and the set of right optimal policies, respectively. 
We say $\alpha\in(0,1)$ is a \textbf{regular point} if $D(\alpha-)=D(\alpha)=D(\alpha+),$ and the point $\alpha=0$ is regular if $D(0)=D(0+).$ Otherwise, we say $\alpha\in[0,1)$ is an \textbf{irregular point}. There are different types of irregular points; see \cite{TPa} for details. The following lemma is a well-known fact. It states that the interval $[0,1)$ can be partitioned into finitely many subintervals, such that for all discount factors on each of these subintervals, the set of optimal policies remains unchanged, and the value function is analytic in discount factors. See also \cite[Theorem 1.16]{HB}, \cite[Proposition 4.5.3]{sennott1999stochastic}, \cite[Remark 1]{Lewis}, and the proof of \cite[Theorem 5]{BW}.

\begin{lem}\label{lempartition}(a) \textnormal{(\cite[Theorem 3.6(c)]{TPa})} For each $x\in\X$ and $\phi\in\F$, the function $v_\alpha^\phi(x)$ is rational  in $\alpha,$ and the function $V_\alpha(x)$ is continuous and piecewise rational in $\alpha$. There is a unique finite set of points $\{a_i\}_{i=0}^l\subset[0,1]$ such that $a_0=0,$  $a_l=1,$  $a_i<a_{i+1}$ for $0\le i\le l-1,$ the function $D(\alpha)$ is constant on each open interval $\mathcal{I}_i:=(a_i,a_{i+1})$ for $0\le i\le l-1,$ but it has jumps at the points $\{a_i\}_{i=1}^l$ and is not constant on each of half-open intervals $(a_i,a_{i+1}],$ $[a_{i+1},a_{i+2})$ for $0\le i\le l-2.$ In addition, $\{a_1,\cdots,a_{l-1}\}$ is the set of irregular points, and  $V_\alpha=v_\alpha^{\phi^{(i)}}$ for all $\phi^{(i)}\in D(\mathcal{I}_i),\alpha\in\mathcal{I}_i,$ and $0\le i\le l-1.$\medskip\\
(b) \textnormal{(\cite[Proposition 3.10(c) and Remark 3.11]{TPa})} The set-valued function $D(\alpha)$
is upper hemicontinuous in $\alpha\in [0,1),$ and therefore, for each $x\in\X,$  the set-valued functions $A_x(\alpha)$ 
are upper hemicontinuous in $\alpha\in [0,1).$ 
\end{lem}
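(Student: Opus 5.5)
The plan is to work from the matrix representation $v_\alpha^\phi=(I-\alpha P(\phi))^{-1}r(\phi)$. It is valid for every $\alpha\in[0,1)$ because $P(\phi)$ is stochastic, so $\|\alpha P(\phi)\|_\infty=\alpha<1$ and $I-\alpha P(\phi)$ is invertible.

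\textbf{Rationality of $v_\alpha^\phi$.} By Cramer's rule, each entry of $(I-\alpha P(\phi))^{-1}$ equals a polynomial in $\alpha$ (a cofactor of $I-\alpha P(\phi)$) divided by $\det(I-\alpha P(\phi))$. This determinant is a polynomial that does not vanish on $[0,1)$. Hence $v_\alpha^\phi(x)$ is rational in $\alpha$, with no poles on $[0,1)$.

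\textbf{Continuity of $V_\alpha$.} Since $\F$ is finite, $V_\alpha(x)=\max_{\phi\in\F}v_\alpha^\phi(x)$ is a finite maximum of continuous functions, so it is continuous.

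\textbf{Finitely many switches.} For $\phi,\psi\in\F$ and $x\in\X$, the difference $v_\alpha^\phi(x)-v_\alpha^\psi(x)$ is rational. So it is either identically zero or has finitely many zeros in $[0,1)$. Let $Z$ be the union of all such zero sets over the finitely many triples $(\phi,\psi,x)$ whose difference is not identically zero; $Z$ is finite. On each connected component of $[0,1)\setminus Z$, every pairwise comparison has a constant sign. Therefore the set of maximizers at each state is constant there, and so is $D(\alpha)=\{\phi:v_\alpha^\phi(x)=V_\alpha(x)\ \forall x\}$. This uses the standard fact that some policy attains the maximum simultaneously at all states, so $D(\alpha)\neq\emptyset$. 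On each such component $V_\alpha=v_\alpha^{\phi}$ for any $\phi$ in that constant set, which gives piecewise rationality.

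\textbf{The partition $\{a_i\}$.} Take $\{a_i\}$ to be $\{0,1\}$ together with exactly those points of $Z\cap(0,1)$ at which $D$ genuinely fails to be locally constant. Equivalently, merge adjacent intervals whenever $D$ takes the same value on both sides and at the separating point. This minimal choice is unique, since it is characterized intrinsically as the set of points where $D$ is not locally constant. The defining property of each $a_i$ (non-constancy on the half-open intervals) is then exactly the statement that $D(a_i-)$, $D(a_i)$, $D(a_i+)$ are not all equal, i.e., $a_i$ is irregular. Here $D(a_i\pm)$ equals the constant value of $D$ on the adjacent interval.

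\textbf{Part (b).} Upper hemicontinuity of a set-valued map into a finite discrete set means that for $\alpha_n\to\alpha$, eventually $D(\alpha_n)\subset D(\alpha)$. Suppose $\phi\in D(\alpha_n)$ along a subsequence. Then $v_{\alpha_n}^\phi=V_{\alpha_n}$, and letting $n\to\infty$, continuity of both sides gives $v_\alpha^\phi=V_\alpha$, i.e., $\phi\in D(\alpha)$. Since $\F$ is finite, only finitely many $\phi$ need checking, so the inclusion holds eventually. The statement for $A_x(\alpha)$ follows because it is the image of $D(\alpha)$ under the projection $\phi\mapsto\phi(x)$.

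\textbf{Main obstacle.} The only delicate point is the bookkeeping in the partition step. One must verify that each merged interval is open with $D$ constant on it, and that the resulting point set coincides with the irregular points. Both follow from the finiteness of $Z$ and the definitions of $D(\alpha\pm)$.
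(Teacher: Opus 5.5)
The paper does not prove this lemma; it quotes it from \cite{TPa}, so there is no internal argument to compare against. Several parts of your proposal are sound: rationality via Cramer's rule, continuity of $V_\alpha$, the finite exceptional set $Z$, constancy of $D$ on the open intervals, uniqueness, and part (b) (sequential upper hemicontinuity using finiteness of $\F$). There is, however, a genuine gap in the partition step.

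You claim that non-constancy of $D$ on the half-open intervals $(a_{i-1},a_i]$ and $[a_i,a_{i+1})$ is ``exactly'' irregularity of $a_i$. It is not. Non-constancy on both intervals means $D(a_i-)\neq D(a_i)$ \emph{and} $D(a_i)\neq D(a_i+)$, whereas irregularity only says that at least one of these holds. The configuration $D(a_i-)=D(a_i)\supsetneq D(a_i+)$ is irregular, yet it makes $D$ constant on $(a_{i-1},a_i]$. Nothing you invoke (finiteness of $Z$, the definitions of $D(\alpha\pm)$) rules this out. Even an upper hemicontinuous piecewise constant set-valued map can behave this way, e.g.\ $\{1,2\}$ on $(0,\tfrac12]$ and $\{1\}$ on $(\tfrac12,1)$.

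The missing idea is the identity principle for rational functions. Suppose $D(a_i)=D(a_i-)$. By (b), $D(a_i+)\subseteq D(a_i)$, so any $\psi\in D(a_i+)$ (nonempty) also lies in $D(a_i-)$. For every $\phi\in D(a_i-)$, the rational vectors $v_\theta^\phi$ and $v_\theta^\psi$ both equal $V_\theta$ on $(a_{i-1},a_i)$, hence coincide on all of $[0,1)$. Thus $\phi$ is optimal on $(a_i,a_{i+1})$, i.e.\ $\phi\in D(a_i+)$. This gives $D(a_i)=D(a_i-)\subseteq D(a_i+)\subseteq D(a_i)$, contradicting irregularity; the case $D(a_i)=D(a_i+)$ is symmetric. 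Adding this step completes your argument; there is no circularity, since your proof of (b) uses only continuity.

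Separately, under the paper's convention $0$ is irregular whenever $D(0)\neq D(0+)$, which can occur. So $\{a_1,\dots,a_{l-1}\}$ is really the set of irregular points in $(0,1)$, which is exactly what your construction produces.
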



Let $\pmb{0},\pmb{1}\in\R^m$ be the zero vector and the vector with all entries being $1,$ respectively. Note that $P(\phi)\pmb{1}=\pmb{1}$ for all $\phi\in\F$ and for all $\alpha\in[0,1).$ The following lemma is a result from linear algebra. 

\begin{lem}\label{lem1}
    Let $\phi\in\F,$ $k\in\N,$ and $c\in\R.$ Then
    \begin{enumerate}[(a)]
        \item $P^k(\phi)r(\phi)=\mathbf{0}$ iff $P^t(\phi)r(\phi)=\mathbf{0}$ for all $t\ge\min\left\{k,m-1\right\};$
        \item $P^k(\phi)r(\phi)=c\mathbf{1}$ iff $P^t(\phi)r(\phi)=c\mathbf{1}$ for all $t\ge\min\left\{k,m-1\right\}.$
    \end{enumerate}
\end{lem}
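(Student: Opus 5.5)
The plan is to prove a slightly more general statement: for any vector $w\in\R^m$, $P^k(\phi)w=\mathbf{0}$ iff $P^t(\phi)w=\mathbf{0}$ for all $t\ge\min\{k,m-1\}$. Part (a) is the case $w=r(\phi)$, and part (b) will follow by a shift.

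The direction ``if'' is trivial. If $k\le m-1$, then $t=k$ is among the indices $t\ge \min\{k,m-1\}=k$. If $k>m-1$, then $k\ge m-1$ is again among them.

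For ``only if'', write $P:=P(\phi)$. For $t\ge k$ we have $P^tw=P^{t-k}P^kw=\mathbf{0}$, so the only nontrivial case is $k>m-1$. There we must show $P^{m-1}w=\mathbf{0}$, after which all $t\ge m-1$ follow in the same way. The key point is that the kernels $\ker P^j$ form a nondecreasing chain $\ker P\subseteq\ker P^2\subseteq\cdots$. This chain stabilizes as soon as two consecutive kernels coincide: if $\ker P^j=\ker P^{j+1}$ and $P^{j+2}u=\mathbf{0}$, then $Pu\in\ker P^{j+1}=\ker P^j$, so $u\in\ker P^{j+1}$. Hence every strict inclusion raises the dimension by at least one, and the stable kernel $\bigcup_j\ker P^j$ equals the generalized eigenspace $N$ of $P$ for eigenvalue $0$. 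Therefore stabilization occurs by step $\dim N$.

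The step that needs care, and the only place the stochastic structure enters, is showing $\dim N\le m-1$. This holds because $P\mathbf{1}=\mathbf{1}$, so $1$ is an eigenvalue of $P$. Consequently $0$ has algebraic multiplicity at most $m-1$, i.e., $P$ is not nilpotent and $\dim N\le m-1$. It follows that $\ker P^k=\ker P^{m-1}$ for every $k\ge m-1$. So $P^kw=\mathbf{0}$ implies $P^{m-1}w=\mathbf{0}$, and then $P^tw=\mathbf{0}$ for all $t\ge m-1$.

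For (b), set $w:=r(\phi)-c\mathbf{1}$. Since $P^t\mathbf{1}=\mathbf{1}$ for all $t$, we have $P^tr(\phi)=c\mathbf{1}$ iff $P^tw=\mathbf{0}$ for every $t\in\N$. Applying the general version of (a) to $w$ then gives (b) directly.
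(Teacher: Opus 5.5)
Your proof is correct, and it takes a more elementary route than the paper.

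The paper writes $P(\phi)=QJQ^{-1}$ in Jordan form with $Q\mathbf{e}_1=\mathbf{1}$ and $J\mathbf{e}_1=\mathbf{e}_1$. It argues that the eigenvalue-$1$ block forces every nilpotent Jordan block to have size at most $m-1$, so $J^k\mathbf{v}=\mathbf{0}$ implies $J^{m-1}\mathbf{v}=\mathbf{0}$. It then handles (b) separately in Jordan coordinates, splitting $Q^{-1}r(\phi)=c\mathbf{e}_1+\tilde{\mathbf{u}}$ with $J^k\tilde{\mathbf{u}}=\mathbf{0}$.

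You replace the Jordan form by the stabilization of the chain $\ker P^j$, a Fitting-type argument, and bound the stabilization index by $\dim N\le m-1$. Both proofs rest on the same fact: the nilpotent part of $P(\phi)$ lives on a subspace of dimension at most $m-1$ because $P(\phi)\mathbf{1}=\mathbf{1}$.

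Your version buys two things. First, it stays entirely over $\R$, avoiding a Jordan decomposition that is in general complex, since stochastic matrices can have complex eigenvalues. Second, proving the statement for an arbitrary vector $w$ gives (b) at once from $w=r(\phi)-c\mathbf{1}$. That is exactly the shift the paper performs implicitly, since its $\tilde{\mathbf{u}}$ equals $Q^{-1}(r(\phi)-c\mathbf{1})$.

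You could also drop the appeal to algebraic multiplicity. Since $P^j(\phi)\mathbf{1}=\mathbf{1}\neq\mathbf{0}$ for all $j$, we have $\mathbf{1}\notin N$, so $N$ is a proper subspace and $\dim N\le m-1$ directly.
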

\begin{proof}
    See Appendix~\ref{ap1}.
\end{proof}

For $\phi\in\F$ and $\alpha\in[0,1),$ let $Q_\alpha(\phi)\in\R^{m\times m}$ such that $Q_\alpha(\phi):=[I-\alpha P(\phi)]^{-1}P(\phi)=P(\phi)[I-\alpha P(\phi)]^{-1}=\sum_{k=1}^\infty \alpha^{k-1}P^k(\phi).$ Clearly $Q_0(\phi)=P(\phi).$ Note that $Q^k_\alpha(\phi)\pmb{1}=(1-\alpha)^{-k}\pmb{1}$ for all $k\ge 0,$ $\alpha\in[0,1),$ and $\phi\in\F.$ Let $P^0=I$ for $P\in\mathbb{R}^{m\times m}.$ Then $P^0\mathbf{v}=\mathbf{v}.$ For $\alpha\in[0,1),$ $\phi\in\F,$ and $\mathbf{v}\in\mathbb{R}^m,$ let $G_\alpha(\phi,\mathbf{v}):=1,$ if $\mathbf{v}=c\mathbf{1}$ for some $c\in\mathbb{R},$ and 
\begin{equation}\label{defG}
    G_\alpha(\phi,\mathbf{v})=\max\left\{k\in\N^+: \left\{\pmb{1},Q_\alpha^0(\phi)\mathbf{v},Q_\alpha^1(\phi)\mathbf{v},\cdots,Q_\alpha(\phi)^{k-2}\mathbf{v}\right\}\ \text{are linearly independent}\right\},
\end{equation}
if $\mathbf{v}\ne c\mathbf{1}$ for all $c\in\R.$ Clearly $2\le G_\alpha(\phi,\mathbf{v})\le m$ iff $\mathbf{v}\ne c\mathbf{1}$ for all $c\in\R.$ In particular, $G_\alpha(\phi,\mathbf{v})=1$ if $m=1.$ Thus, $1\le G_\alpha(\phi,\mathbf{v})\le m.$ The following lemma shows that the definition of $G_\alpha$ does not depend on $\alpha.$ 

\begin{lem}\label{G}
    $G_\alpha(\phi,\mathbf{v})=G_0(\phi,\mathbf{v})$ for all $\alpha\in[0,1),$ $\phi\in\F,$ and $\mathbf{v}\in\R^m.$
\end{lem}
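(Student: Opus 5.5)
The plan is to show that for every $k\ge1$ the two linear spans
\[
\mathcal{S}_\alpha^k:=\operatorname{span}\left\{\pmb{1},Q_\alpha^0(\phi)\mathbf{v},\dots,Q_\alpha^{k-2}(\phi)\mathbf{v}\right\},\qquad \mathcal{S}_0^k:=\operatorname{span}\left\{\pmb{1},\mathbf{v},P(\phi)\mathbf{v},\dots,P^{k-2}(\phi)\mathbf{v}\right\}
\]
coincide, where $Q_0(\phi)=P(\phi)$. Each list has exactly $k$ vectors, so it is linearly independent iff its span has dimension $k$. Hence equal spans give equal values in the maximum in \eqref{defG}. The case $\mathbf{v}=c\pmb{1}$ is trivial, since both sides equal $1$ by definition and the condition does not involve $\alpha$. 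So we assume $\mathbf{v}\ne c\pmb{1}$ for all $c\in\R$. Write $P=P(\phi)$ and $Q=Q_\alpha(\phi)$.

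The key step is to relate $Q^j$ to powers of $P$ modulo lower-order terms. The natural tool is that $Q$ and $P$ commute and are linked by $Q=P+\alpha PQ$, i.e.\ $P=Q(I+\alpha Q)^{-1}=(I+\alpha Q)^{-1}Q$. Here $I+\alpha Q=(I-\alpha P)^{-1}$ is invertible.

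Since the spans need not be finite-degree expressions, I will use invariant subspaces rather than series. Let $\mathcal{K}$ be the Krylov space $\operatorname{span}\{\pmb{1},\mathbf{v},P\mathbf{v},P^2\mathbf{v},\dots\}$. Because $P\pmb{1}=\pmb{1}$, this space is $P$-invariant, and hence invariant under $(I-\alpha P)^{-1}$ and under $Q$, which are polynomials in $P$ by Cayley–Hamilton. The same holds with the roles swapped. I will prove $\mathcal{S}_\alpha^k=\mathcal{S}_0^k$ by induction on $k$, with $k=1,2$ immediate since both spans are $\operatorname{span}\{\pmb{1}\}$ and $\operatorname{span}\{\pmb{1},\mathbf{v}\}$.

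For the inductive step, assume $\mathcal{S}_\alpha^{k}=\mathcal{S}_0^{k}=:\mathcal{S}^k$. The identity $Q=P+\alpha QP$ gives
\[
Q^{k-1}\mathbf{v}=Q\,Q^{k-2}\mathbf{v}=(P+\alpha QP)Q^{k-2}\mathbf{v}.
\]
The plan is to show $Q^{k-1}\mathbf{v}\in P^{k-1}\mathbf{v}\cdot c+\mathcal{S}^k+\alpha Q\,(\text{stuff})$. This is cleaner done in the quotient space.

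\textbf{Main obstacle.} Since $Q=\sum_{j\ge1}\alpha^{j-1}P^j$ involves high powers of $P$, $Q\mathbf{w}$ generally leaves $\mathcal{S}^{k+1}$. The right statement is a triangular one: $Q$ maps $\mathcal{S}^k$ into $\mathcal{S}^{k+1}$ only when $\mathcal{S}^k$ is not yet $P$-invariant. I therefore switch to the alternative of comparing the index directly. Let $g$ be the smallest $k$ such that $P^{k-1}\mathbf{v}\in\mathcal{S}_0^{k}$, i.e.\ $\mathcal{S}_0^{k}$ is $P$-invariant.

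Then $\mathcal{S}_0^{g}=\mathcal{K}$ is $P$-invariant, hence $Q$-invariant. Consequently all $Q^j\mathbf{v}$ lie in $\mathcal{S}_0^g$, so $G_\alpha\le G_0$. By symmetry, using $P=Q(I+\alpha Q)^{-1}$ as a polynomial in $Q$ and the $Q$-Krylov space containing $\pmb{1}$ because $Q\pmb{1}=(1-\alpha)^{-1}\pmb{1}$, we get $G_0\le G_\alpha$.

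This reduces the proof to one fact: $G_\alpha(\phi,\mathbf{v})$ equals the dimension of the Krylov space generated by $\pmb{1},\mathbf{v}$ under $Q$, minus one plus one appropriately. This is the standard property that a Krylov sequence is independent until it first becomes dependent, after which the span is invariant. Here it is applied in the quotient $\R^m/\operatorname{span}\{\pmb{1}\}$, where $P$ and $Q$ act because $\pmb{1}$ is a common eigenvector. In that quotient both dimensions equal $\dim$ of the cyclic subspace of $\bar{\mathbf{v}}$. This cyclic subspace is the same for $\bar P$ and $\bar Q$, since each is a polynomial in the other. That gives $G_\alpha=G_0$.
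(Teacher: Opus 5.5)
Your invariant-subspace argument, from the minimal index $g$ onward, is correct. The opening plan, however, must be deleted rather than merely abandoned, because it is false. For $\alpha>0$ the spans $\mathcal S^k_\alpha$ and $\mathcal S^k_0$ differ whenever $2<k<G_0$. Already for $k=3$: suppose $\mathbf 1,\mathbf v,P\mathbf v,P^2\mathbf v$ are independent and $Q_\alpha\mathbf v=a\mathbf 1+b\mathbf v+cP\mathbf v$. Applying $I-\alpha P$ and using $(I-\alpha P)Q_\alpha\mathbf v=P\mathbf v$ forces $a=b=0$, $c=1$ and $\alpha c=0$, a contradiction.

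The ``triangular'' remark is also wrong. For $\alpha>0$ and $k\ge2$, the inclusion $Q_\alpha\mathcal S^k_0\subseteq\mathcal S^{k+1}_0$ holds only once $\mathcal S^{k+1}_0$ is already $P$-invariant. What does hold at each level is that $(I-\alpha P)^{k-2}$ maps $\mathcal S^k_\alpha$ onto $\mathcal S^k_0$, and that is exactly the paper's proof. There, $(I-\alpha P)^{k-2}Q_\alpha^i\mathbf v=(I-\alpha P)^{k-2-i}P^i\mathbf v$ equals $P^i\mathbf v$ plus a combination of $P^{i+1}\mathbf v,\dots,P^{k-2}\mathbf v$. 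Hence $(I-\alpha P)^{k-2}B_\alpha=B_0T$ with $T$ triangular with nonzero diagonal, so $B_\alpha$ and $B_0$ have the same rank for every $k$.

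Your route instead compares only the terminal spaces. By Krylov stabilization (which uses that $\mathbf 1$ is an eigenvector of both $P$ and $Q_\alpha$), $G_0$ and $G_\alpha$ are the dimensions of $\operatorname{span}\{\mathbf 1,\mathbf v,P\mathbf v,\dots\}$ and $\operatorname{span}\{\mathbf 1,\mathbf v,Q_\alpha\mathbf v,\dots\}$. These spaces coincide because $Q_\alpha=P(I-\alpha P)^{-1}$ and $P=Q_\alpha(I+\alpha Q_\alpha)^{-1}$ are polynomials in each other.

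This is more conceptual. It exhibits $G_0(\phi,\mathbf v)-1$ as the dimension of the $\bar P$-cyclic subspace generated by $\bar{\mathbf v}$ in $\R^m/\operatorname{span}\{\mathbf 1\}$, which is visibly the same for any matrix generating the same algebra as $P$. But everything rests on the stabilization fact, so:
\begin{itemize}
\item state it exactly, as $G_\alpha=\dim\operatorname{span}\{\mathbf 1,\mathbf v,Q_\alpha\mathbf v,\dots\}$, rather than ``minus one plus one appropriately'';
\item cite it where you pass from ``all $Q^j\mathbf v\in\mathcal S^g_0$'' to $G_\alpha\le G_0$, since that step needs $\dim\mathcal S^g_0=g=G_0$.
\end{itemize}
The paper's explicit factorization avoids Cayley--Hamilton and stabilization entirely, and gives a concrete isomorphism between the spans at each level $k$.
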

\begin{proof}
It is sufficient to show that, for each fixed $\alpha\in[0,1)$ and each $k\ge2,$ the two $m\times k$ matrices
\begin{equation*}
    B_\alpha=\begin{bmatrix}
        \mathbf{1} & \mathbf{v} & Q_\alpha(\phi)\mathbf{v} & \cdots & Q_\alpha^{k-2}(\phi)\mathbf{v}
    \end{bmatrix},\qquad B_0=\begin{bmatrix}
        \mathbf{1} & \mathbf{v} & P(\phi)\mathbf{v} & \cdots & P^{k-2}(\phi)\mathbf{v}
    \end{bmatrix}
    \end{equation*}
    have the same rank. Note that
    \begin{align}\label{123}
    &[I-\alpha P(\phi)]^{k-2}B_\alpha=\begin{bmatrix}
       (1-\alpha)^{k-2}\mathbf{1} & [I-\alpha P(\phi)]^{k-2}\mathbf{v} & [I-\alpha P(\phi)]^{k-3}P(\phi)\mathbf{v} & \cdots & P^{k-2}(\phi)\mathbf{v}
    \end{bmatrix}.
    \end{align}
    Observe that, for the $(i+2)$-th column of the right-hand side of \eqref{123}, where $i=0,1,\cdots,k-2,$
    \begin{align*}
    &\quad[I-\alpha P(\phi)]^{k-2-i}P^i(\phi)\mathbf{v}=P^i(\phi)\mathbf{v}+c_{i,1}P^{i+1}(\phi)\mathbf{v}+c_{i,2}P^{i+2}(\phi)\mathbf{v}+\cdots+c_{i,k-2-i}P^{k-2}(\phi)\mathbf{v}\\
    &=\begin{bmatrix}
        \mathbf{1} & \mathbf{v} & P(\phi)\mathbf{v} & \cdots & P^{k-2}(\phi)\mathbf{v}
    \end{bmatrix}\left[
        \underbrace{0 \ \ \ \cdots \ \ \ 0}_{(i+1)\ \text{times}} \ \ \ 1 \ \ \ c_{i,1} \ \ \ c_{i,2} \ \ \ \cdots \ \ \ c_{i,k-2-i}
    \right]^\mathsf{T},
    \end{align*}
    where $c_{i,j}=\binom{j}{k-2-i}(-\alpha)^j$ for $j=1,2,\cdots,k-2-i,$ and the first equality above follows from the binomial formula. 
    Therefore, \eqref{123} becomes
    \begin{align*}
    [I-\alpha P(\phi)]^{k-2}B_\alpha&=\begin{bmatrix}
        \mathbf{1} & \mathbf{v} & P(\phi)\mathbf{v} & \cdots & P^{k-2}(\phi)\mathbf{v}
    \end{bmatrix}\begin{bmatrix}
              (1-\alpha)^{k-2} & & & &\\
                 & 1 & &  &  \\
                 & \times & 1 & &  \\
                 & \vdots & \vdots & \ddots\\
                 & \times & \times & \cdots & 1 \end{bmatrix}_{k\times k}\\
                 &=B_0\begin{bmatrix}
              (1-\alpha)^{k-2} & & & &\\
                 & 1 & &  &  \\
                 & \times & 1 & &  \\
                 & \vdots & \vdots & \ddots\\
                 & \times & \times & \cdots & 1 \end{bmatrix}_{k\times k},
 \end{align*}
    where all the blank entries are $0,$ and each $``\times"$ represents some real number. Therefore, since both $I-\alpha P(\phi)$ and the last matrix multiplied by $B$ are invertible, we conclude that $B_\alpha$ and $B$ have the same rank.\qedhere
    
\end{proof}

The following lemma is a generalization of \cite[Theorem 2.1]{TPa} which is a special case $\alpha=0.$


\begin{lem}\label{la}
    Let $\alpha\in[0,1),$ $\phi_1,\phi_2\in\F,$ $\mathbf{v}_1,\mathbf{v}_2\in\R^m,$ and $n\in\N.$ Then the following statements are equivalent:
    \begin{enumerate}[(a)]
        \item $Q_\alpha^t(\phi_1)\mathbf{v}_1=Q_\alpha^t(\phi_2)\mathbf{v}_2$ for all $\alpha\in[0,1)$ and for all $t\in\N;$
        \item $Q_\alpha^t(\phi_1)\mathbf{v}_1=Q_\alpha^t(\phi_2)\mathbf{v}_2$ for all $t=0,1,\cdots,\min\left\{G_0(\phi_1,\mathbf{v}_1),G_0(\phi_2,\mathbf{v}_2)\right\}-1.$
    \end{enumerate}
\end{lem}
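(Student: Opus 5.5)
The direction (a)$\Rightarrow$(b) is immediate, since (b) asks for equality only at the given $\alpha$ and for finitely many $t$. For (b)$\Rightarrow$(a) I will argue in two stages. First I extend (b) to all $t\in\N$ at the fixed $\alpha$, using a Krylov-type recursion controlled by $G$. Then I transfer the conclusion to every other discount factor $\alpha'\in[0,1)$ through a generating-function (resolvent) identity.

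\emph{Stage 1 (fixed $\alpha$, all $t$).} Let $g:=\min\{G_0(\phi_1,\mathbf{v}_1),G_0(\phi_2,\mathbf{v}_2)\}$, and assume without loss of generality that $g=G_0(\phi_1,\mathbf{v}_1)$. By Lemma~\ref{G}, $g=G_\alpha(\phi_1,\mathbf{v}_1)$.
\begin{itemize}
\item If $g=1$, then $\mathbf{v}_1=c\mathbf{1}$, and (b) with $t=0$ gives $\mathbf{v}_2=c\mathbf{1}$. Then $Q_\alpha^t(\phi_i)\mathbf{v}_i=c(1-\alpha)^{-t}\mathbf{1}$ for $i=1,2$.
\item If $g\ge2$, maximality in \eqref{defG} (together with $g\le m$, or trivially if $g=m$ by dimension count) gives a linear dependence. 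Since $\{\mathbf{1},\mathbf{v}_1,\dots,Q_\alpha^{g-2}(\phi_1)\mathbf{v}_1\}$ is independent, this dependence has the form
\[Q_\alpha^{g-1}(\phi_1)\mathbf{v}_1=c\mathbf{1}+\sum_{j=0}^{g-2}c_jQ_\alpha^j(\phi_1)\mathbf{v}_1 .\]
By (b), the same identity holds with $(\phi_1,\mathbf{v}_1)$ replaced by $(\phi_2,\mathbf{v}_2)$, because every term has index at most $g-1$.
\end{itemize}
Applying $Q_\alpha^{s}(\phi_i)$ and using $Q_\alpha^s(\phi_i)\mathbf{1}=(1-\alpha)^{-s}\mathbf{1}$ yields a common linear recursion for both sequences $u^{(i)}_t:=Q_\alpha^t(\phi_i)\mathbf{v}_i$:
\[u^{(i)}_{t+g-1}=c(1-\alpha)^{-t}\mathbf{1}+\sum_{j=0}^{g-2}c_j\,u^{(i)}_{t+j}.\]
Both sequences share the initial values $t=0,\dots,g-1$, so induction on $t$ gives $u^{(1)}_t=u^{(2)}_t$ for all $t\in\N$.

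\emph{Stage 2 (all discount factors).} Let $h_i(\beta):=[I-\beta P(\phi_i)]^{-1}\mathbf{v}_i$, which is rational in $\beta$. For $|z|$ small, Stage~1 gives
\[\sum_t z^tQ_\alpha^t(\phi_1)\mathbf{v}_1=\sum_t z^tQ_\alpha^t(\phi_2)\mathbf{v}_2,\]
that is, $[I-zQ_\alpha(\phi_1)]^{-1}\mathbf{v}_1=[I-zQ_\alpha(\phi_2)]^{-1}\mathbf{v}_2$. Write $\beta=\alpha+z$. Then $I-zQ_\alpha(\phi)=[I-\beta P(\phi)][I-\alpha P(\phi)]^{-1}$, so the left side equals $[I-\alpha P(\phi_1)]h_1(\beta)$. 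Using $P h(\beta)=\beta^{-1}(h(\beta)-\mathbf{v})$, this equals $(1-\alpha/\beta)h_1(\beta)+(\alpha/\beta)\mathbf{v}_1$ for $\beta\ne0$. (If $\alpha=0$ the left side is just $h_1(z)$.) Since $\mathbf{v}_1=\mathbf{v}_2$ by the case $t=0$ of (b), we get $h_1(\beta)=h_2(\beta)$ for all $\beta\ne\alpha$ near $\alpha$. By rationality this holds identically, so the Taylor coefficients $P^t(\phi_1)\mathbf{v}_1=P^t(\phi_2)\mathbf{v}_2$ agree for all $t$. Finally, $Q_{\alpha'}^t(\phi)=P^t(\phi)[I-\alpha'P(\phi)]^{-t}$ is a convergent power series in $P(\phi)$ with coefficients independent of $\phi$. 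Applying it to $\mathbf{v}_i$ therefore gives equality for every $\alpha'\in[0,1)$ and every $t$, which is (a).

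The main obstacle is Stage~2: Stage~1 only uses the dependence relation at the given $\alpha$, so passing to other discount factors needs this resolvent identity, and the case $\alpha=0$ (where division by $\beta$ is not needed) has to be checked separately.
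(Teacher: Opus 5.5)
Your proof is correct, and its core is the paper's argument. Stage~1 is the paper's proof in scalar form. The paper encodes the same dependence of $Q_\alpha^{g-1}(\phi_1)\mathbf{v}_1$ on $\mathbf{1},\mathbf{v}_1,\dots,Q_\alpha^{g-2}(\phi_1)\mathbf{v}_1$ as $Q_\alpha^t(\phi_i)\mathbf{v}=B_\alpha\mathbf{c}_t$, using a companion-type matrix and the $\alpha$-dependent vector $\mathbf{c}_{g-1}$, and runs the same induction on $t$.

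The real difference is Stage~2. The paper stops once the equalities hold for all $t$ at the given $\alpha$, and asserts without argument that they imply (a). Yet (a) quantifies over all discount factors, while the recursion coefficients are tied to that single $\alpha$. Your Stage~2 supplies exactly this missing passage:
\begin{itemize}
\item the resolvent identity $[I-zQ_\alpha(\phi)]^{-1}=[I-\alpha P(\phi)][I-(\alpha+z)P(\phi)]^{-1}$;
\item the resulting reduction to $P^t(\phi_1)\mathbf{v}=P^t(\phi_2)\mathbf{v}$ for all $t$, via rationality of $h_i$;
\item the re-expansion of $Q_{\alpha'}^t(\phi)=P^t(\phi)[I-\alpha'P(\phi)]^{-t}$ in powers of $P(\phi)$.
\end{itemize}
So your version is the more complete of the two.

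A slightly shorter route to the same end uses the identity $Q_{\alpha'}(\phi)=Q_\alpha(\phi)[I-(\alpha'-\alpha)Q_\alpha(\phi)]^{-1}$, valid for every $\alpha'\in[0,1)$. By Stage~1, $Q_\alpha(\phi_1)$ and $Q_\alpha(\phi_2)$ agree on the common invariant subspace spanned by $\mathbf{1}$ and the vectors $Q_\alpha^s(\phi_i)\mathbf{v}$. Hence $Q_{\alpha'}(\phi_1)$ and $Q_{\alpha'}(\phi_2)$ agree there too. This avoids passing through $\alpha=0$ and treating that case separately.
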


\begin{proof}It is obvious that
    (a) implies (b). Let us assume (b) and prove (a). According to Lemma~\ref{G}, $G_0(\phi_i,\mathbf{v}_i)=G_\alpha(\phi_i,\mathbf{v}_i)$ for $i=1,2.$ The proof, which we provide for completeness, is essentially the same as the proof of \cite[Proposition 2.2]{TPa}. Note that (b) and \eqref{defG} imply $G_0(\phi_1,\mathbf{v}_1)=G_0(\phi_2,\mathbf{v}_2).$ Let us define $g:=G_0(\phi_1,\mathbf{v}_1)=G_0(\phi_2,\mathbf{v}_2).$ 
      If $g=1$, then $\mathbf{v}_1=c\pmb{1}$ for some $c\in\R,$ which together with $\mathbf{v}_1=\mathbf{v}_2$ implies $Q_\alpha^t(\phi_1)\mathbf{v}_1=Q_\alpha^t(\phi_2)\mathbf{v}_2=\frac{c}{(1-\alpha)^t}\pmb{1}$ for all $t\in\N$. Thus (b) holds if $g=1.$
    
    Let $g\ge2.$ Similarly to the proof of Lemma \ref{G}, we define 
		\[B_\alpha:=\begin{bmatrix} \pmb{1} & \mathbf{v}_1 & Q_\alpha(\phi_1)\mathbf{v}_1 & \cdots & Q_\alpha^{g-2}(\phi_1)\mathbf{v}_1 \end{bmatrix}=\begin{bmatrix} \pmb{1} & \mathbf{v}_2 & Q_\alpha(\phi_2)\mathbf{v}_2 & \cdots & Q_\alpha^{g-2}(\phi_2)\mathbf{v}_2 \end{bmatrix}\in \R^{m\times g} ,\]
		 where $B_\alpha=\begin{bmatrix} \pmb{1} & \mathbf{v}_1 \end{bmatrix}=\begin{bmatrix} \pmb{1} & \mathbf{v}_2  \end{bmatrix}$ if $g=2.$ In view of (b), $\mathbf{v}_1=\mathbf{v}_2.$ Let us define $\mathbf{v}:=\mathbf{v}_1=\mathbf{v}_2.$
         
          Let us prove the equalities
		\begin{equation}\label{eqPTphipPTpsi} Q_\alpha^t(\phi_1)\mathbf{v}=Q_\alpha^t(\phi_2)\mathbf{v}=B_\alpha\mathbf{c}_t,\qquad \forall t\in\N, 
		\end{equation}
		for $\mathbf{c}_t\in\R^g,$ where the $\alpha$-dependent $c_{g-1}\in\R^g$ exists and unique in view of (b) and \eqref{defG},
		and
		\begin{equation}\label{eqctGHEF}\mathbf{c}_t=\begin{cases}
				\mathbf{e}_{t+2}&\ \text{if $t\le g-2,$}\\
				\begin{bmatrix} \frac{1}{1-\alpha}\mathbf{e}_1 & \mathbf{e}_3 & \mathbf{e}_4 & \cdots & \mathbf{e}_g & \mathbf{c}_{g-1}\\ \end{bmatrix}^{t-g+1}\mathbf{c}_{g-1}&\ \text{if $t\ge g-1.$}
		\end{cases}\end{equation}
		Here each $\mathbf{e}_i\in\R^g, i=1,\cdots,g,$ is the $i$-th column unit vector with the $i$-th entry being $1$ and all other entries being $0$. Equalities \eqref{eqPTphipPTpsi} imply (a).

        For $t=0,1,\ldots,g-2,$ equalities \eqref{eqPTphipPTpsi} follow from (b). For $t=g-1,$ \eqref{eqPTphipPTpsi} is already proven above. Suppose that equalities  \eqref{eqPTphipPTpsi}  hold for some $t\ge g-1$. Then, for $i=1,2,$
		\begin{align*}
			Q_\alpha^{t+1}(\phi_i)\mathbf{v}=Q_\alpha(\phi_i)Q_\alpha^t(\phi_i)\mathbf{v}&=Q_\alpha(\phi_i)B_\alpha\mathbf{c}_t\\
           &=\begin{bmatrix} \frac{1}{1-\alpha}\pmb{1} & Q_\alpha(\phi_i)\mathbf{v} & Q_\alpha^2(\phi_i)\mathbf{v} & \cdots & Q_\alpha^{g-1}(\phi_i)\mathbf{v} \end{bmatrix}\mathbf{c}_t\\
			&=B_\alpha\begin{bmatrix} \frac{1}{1-\alpha}\mathbf{e} & \mathbf{e}_3 & \mathbf{e}_4 & \cdots & \mathbf{e}_g & \mathbf{c}_{g-1}\\ \end{bmatrix}\mathbf{c}_t=B_\alpha\mathbf{c}_{t+1},
		\end{align*}
		where $\mathbf{c}_{t+1}=\begin{bmatrix} \frac{1}{1-\alpha}\mathbf{e}_1 & \mathbf{e}_3 & \mathbf{e}_4 & \cdots & \mathbf{e}_g & \mathbf{c}_{g-1}\\ \end{bmatrix}\mathbf{c}_t$ by definition.  Thus, \eqref{eqPTphipPTpsi} is proved by induction.
\end{proof}



Let $\norm{\cdot}$ be the norms in $\R^m$ or $\R^{m\times m}$ defined as
\[\norm{\mathbf{u}}:=\max_{x\in\X}\abs*{\mathbf{u}(x)},\ \mathbf{u}\in\R^m;\qquad\norm{P}:=\max_{1\le i\le m}\sum_{j=1}^m\abs*{P_{i,j}},\ P\in\R^{m\times m}.\]
Note that $\norm{P}=1$ if $P$ is a stochastic matrix, and $\norm{Q_\alpha^n(\phi)}=\frac{1}{(1-\alpha)^n}$ for all $\phi\in\F$ and $n\in\N.$ Also  $\norm{P\mathbf{u}}\le\norm{P}\norm{\mathbf{u}}$ for $P\in\R^{m\times m},\mathbf{u}\in\R^m$. Let $\normo{\cdot}$ be the seminorm in $\mathbb{R}^m$ defined as $\normo*{\mathbf{u}}:=\max_{x\in\X}\mathbf{u}(x)-\min_{x\in\X}\mathbf{u}(x).$ Note that $\normo{\mathbf{u}}=0$ iff $\mathbf{u}=c\mathbf{1}$ for some $c\in\R,$ and $\normo{\mathbf{u}}\le2\norm{\mathbf{u}}.$ For $\phi\in\F,$ let us define the \textbf{Dobrushin coefficient} of the policy $\phi$ as
\begin{equation}\label{gamma}
\gamma_\phi=1-\min_{x,y\in\X}\sum_{z\in\X}\min\left\{p(z|x,\phi(x)),p(z|y,\phi(y))\right\}.
\end{equation}
Clearly, $0\le\gamma_\phi\le1.$ It is known that $\gamma_\phi<1$ iff the Markov chain governed by the policy $\phi$ is uniformly ergodic \cite[Theorem 16.0.2]{Do}. Additional properties of $\gamma_\phi$ can be found on \cite[p.~198]{Put}. The following lemma is a generalization of \cite[Proposition 6.6.1]{Put} from $\alpha=0$ to an arbitrary $\alpha\in[0,1).$

\begin{lem}\label{lemQ} 
Let $\alpha\in[0,1),$ $\phi\in\F,$ $n\in\N,$ and $\mathbf{u}\in\R^m.$ Then $\normo*{Q_\alpha^n(\phi)\mathbf{u}}\le\left(\frac{\gamma_\phi}{1-\alpha\gamma_\phi}\right)^n\normo*{\mathbf{u}}.$ 
\end{lem}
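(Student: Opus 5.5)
It suffices to prove the case $n=1$; the general case then follows by induction on $n$, since $Q_\alpha^{n}(\phi)\mathbf{u}=Q_\alpha(\phi)\bigl(Q_\alpha^{n-1}(\phi)\mathbf{u}\bigr)$ and the case $n=0$ is trivial. For $n=1$ I will use the Neumann series $Q_\alpha(\phi)=\sum_{k=1}^\infty\alpha^{k-1}P^k(\phi)$, together with the $\alpha=0$ case, which is \cite[Proposition 6.6.1]{Put}: $\normo*{P(\phi)\mathbf{w}}\le\gamma_\phi\normo*{\mathbf{w}}$ for every $\mathbf{w}\in\R^m$.

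A naive termwise bound already fails. By subadditivity of the seminorm and iteration of the $\alpha=0$ bound,
\[
\normo*{Q_\alpha(\phi)\mathbf{u}}\le\sum_{k\ge1}\alpha^{k-1}\gamma_\phi^k\normo*{\mathbf{u}}=\frac{\gamma_\phi}{1-\alpha\gamma_\phi}\normo*{\mathbf{u}}.
\]
The series converges because $\alpha\gamma_\phi<1$, and this is exactly the claimed constant. So the whole proof reduces to two facts: the seminorm is subadditive and absolutely homogeneous, and $\normo*{P^k(\phi)\mathbf{u}}\le\gamma_\phi^k\normo*{\mathbf{u}}$, which follows by iterating the one-step contraction.

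To stay self-contained, I would also prove the one-step contraction. Fix states $x,y$ and write $p_x=p(\cdot|x,\phi(x))$ and $p_y=p(\cdot|y,\phi(y))$. Set $\mu=\min\{p_x,p_y\}$ pointwise, so that $p_x-\mu$ and $p_y-\mu$ are nonnegative with equal mass $1-\sum_z\mu(z)\le\gamma_\phi$. Then
\[
(P\mathbf{u})(x)-(P\mathbf{u})(y)=\sum_z(p_x-\mu)(z)\mathbf{u}(z)-\sum_z(p_y-\mu)(z)\mathbf{u}(z)\le\Bigl(1-\sum_z\mu(z)\Bigr)\Bigl(\max\mathbf{u}-\min\mathbf{u}\Bigr)\le\gamma_\phi\normo*{\mathbf{u}}.
\]
Taking the maximum over $x,y$ gives the contraction.

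The remaining details are routine. Passing the seminorm through the infinite sum is justified because $\normo*{\cdot}\le2\norm{\cdot}$ makes it continuous and the series converges in norm. There is no real obstacle here; the only point needing care is that the bound $\gamma_\phi^k$ comes from iterating the one-step bound rather than from $\gamma_{\phi}$ of $P^k$. If $\gamma_\phi=0$, both sides vanish for $n\ge1$, consistent with the formula.
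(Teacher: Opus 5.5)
Your proof is correct and is essentially the paper's argument. The paper writes $Q_\alpha(\phi)=P(\phi)\sum_{t\ge0}\alpha^tP^t(\phi)$, applies the one-step contraction of \cite[Proposition 6.6.1]{Put} and subadditivity termwise to get the same series $\sum_{k\ge1}\alpha^{k-1}\gamma_\phi^k$, and then inducts on $n$; you additionally prove the one-step contraction and justify passing the seminorm through the series, and your phrase ``a naive termwise bound already fails'' should read ``already suffices,'' since that termwise bound is exactly your proof.
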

\begin{proof} 
Let us prove the case for $n=1.$ If $\alpha=0$ then $\normo*{P(\phi)\mathbf{u}}\le\gamma_\phi\normo*{\mathbf{u}}$ in view of \cite[Proposition 6.6.1]{Put}. For $\alpha\in(0,1),$ note that
\begin{align*}
    \normo{Q_\alpha(\phi)\mathbf{u}}&=\normo{P(\phi)[I-\alpha P(\phi)]^{-1}\mathbf{u}}\le\gamma_\phi\normo{[I-\alpha P(\phi)]^{-1}\mathbf{u}}\\
    &=\gamma_\phi\normo*{\sum_{t=0}^\infty\alpha^tP^t(\phi)\mathbf{u}}\le\gamma_\phi\sum_{t=0}^
\infty\alpha^t\normo*{P^t(\phi)\mathbf{u}}\le\gamma_\phi\sum_{t=0}^
\infty(\alpha\gamma_\phi)^t\normo*{\mathbf{u}}=\frac{\gamma_\phi}{1-\alpha\gamma_\phi}\normo{\mathbf{u}},
\end{align*}
where the first and third inequalities are by the statement for $\alpha=0,$ and the second inequality follows from the triangle inequality for seminorms. Hence, the statement is true for $n=1.$ Suppose the statement is true for $n=k$ for some $k\ge1.$ Then for $n=k+1,$
\[\normo*{Q_\alpha^{k+1}(\phi)\mathbf{u}}=\normo*{Q_\alpha(\phi)Q_\alpha^k(\phi)\mathbf{u}}\le\gamma_\phi\normo*{Q_\alpha^k(\phi)\mathbf{u}}\le\gamma\left(\frac{\gamma_\phi}{1-\alpha\gamma_\phi}\right)^k\normo*{\mathbf{u}},\]
where the first inequality follows from the statement of $n=1,$ and the second inequality follows from the induction assumption. Therefore, the lemma is proved by induction.\qedhere

\end{proof}

The following lemma is an elementary result from analysis.

\begin{lem}\label{lemdiff}
    Let $\mathbf{u}_\theta\in\mathbb{R}^m$ be a function of $\theta$ mapping an open set $E\subset\R$ to $\R^m.$ Suppose $\frac{d}{d\theta}\mathbf{u}_\theta$ exists for some $\theta\in E.$ Let $\frac{d}{d\theta^-}(\cdot)$ and $\frac{d}{d\theta^+}(\cdot)$ denote the left derivative and right derivative, respectively. Then 
    \begin{enumerate}[(a)]
        \item both $\frac{d}{d\theta^-}\norm{\mathbf{u}_\theta}$ and $\frac{d}{d\theta^+}\norm{\mathbf{u}_\theta}$ exist, and $\max\left\{\abs*{\frac{d}{d\theta^-}\norm{\mathbf{u}_\theta}},\abs*{\frac{d}{d\theta^+}\norm{\mathbf{u}_\theta}}\right\}\le\norm*{\frac{d}{d\theta}\mathbf{u}_\theta}.$ In particular, if $\frac{d}{d\theta^-}\norm{\mathbf{u}_\theta}=\frac{d}{d\theta^+}\norm{\mathbf{u}_\theta},$ that is, if $\frac{d}{d\theta}\norm{\mathbf{u}_\theta}$ exists, then $\abs*{\frac{d}{d\theta}\norm{\mathbf{u}_\theta}}\le\norm*{\frac{d}{d\theta}\mathbf{u}_\theta};$
        \item both $\frac{d}{d\theta^-}\normo{\mathbf{u}_\theta}$ and $\frac{d}{d\theta^+}\normo{\mathbf{u}_\theta}$ exist, and $\max\left\{\abs*{\frac{d}{d\theta^-}\normo{\mathbf{u}_\theta}},\abs*{\frac{d}{d\theta^+}\normo{\mathbf{u}_\theta}}\right\}\le\normo*{\frac{d}{d\theta}\mathbf{u}_\theta}.$ In particular, if $\frac{d}{d\theta^-}\normo{\mathbf{u}_\theta}=\frac{d}{d\theta^+}\normo{\mathbf{u}_\theta},$ that is, if $\frac{d}{d\theta}\normo{\mathbf{u}_\theta}$ exists, then $\abs*{\frac{d}{d\theta}\normo{\mathbf{u}_\theta}}\le\normo*{\frac{d}{d\theta}\mathbf{u}_\theta}.$
    \end{enumerate}

\begin{proof}
    See Appendix~\ref{ap2}.
\end{proof}
\end{lem}


\section{Estimation of Objective Functions, their Derivatives, and Value Functions in Discount Factors}\label{seces}
For $n\in\N,$ $\alpha\in[0,1),$ and $\phi,\psi\in\F,$ let $u_{n,\alpha}^\phi:=\frac{1}{n!}\frac{d^n}{d\alpha^n}v_\alpha^\phi.$ Observe that $u_{0,\alpha}^\phi=v_\alpha^\phi.$
The following lemma provides formulas for derivatives of objective functions for policies. It also implies that any fixed order of derivatives of objective functions can be computed in strongly polynomial time.

\begin{lem}\label{diff} Let $\alpha\in[0,1),$ $n\in\N,$ and $\phi\in\F.$ Then $u_{n,\alpha}^\phi=Q_\alpha^n(\phi)v_\alpha^\phi.$
\end{lem}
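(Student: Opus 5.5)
We start from the closed form $v_\alpha^\phi=[I-\alpha P(\phi)]^{-1}r(\phi)$, which follows from \eqref{eqexpectation} by writing the expectation as the Neumann series $\sum_{t=0}^\infty\alpha^tP^t(\phi)r(\phi)$; the series converges since $\norm{P(\phi)}=1$ and $\alpha<1$. To lighten notation we write $P=P(\phi)$, $r=r(\phi)$, and $R_\alpha:=[I-\alpha P]^{-1}$. Then $v_\alpha^\phi=R_\alpha r$ and $Q_\alpha(\phi)=PR_\alpha=R_\alpha P$. The second equality holds because $P$ commutes with $I-\alpha P$, hence also with its inverse.

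The key step is the derivative identity $\frac{d}{d\alpha}R_\alpha=R_\alpha P R_\alpha=Q_\alpha(\phi)R_\alpha$. We obtain it by differentiating $(I-\alpha P)R_\alpha=I$: this gives $-PR_\alpha+(I-\alpha P)\frac{d}{d\alpha}R_\alpha=0$. Alternatively, one differentiates the power series $\sum_t\alpha^tP^t$ term by term, which is legitimate on $[0,1)$ (one-sided at $\alpha=0$). Since $Q_\alpha=PR_\alpha$ and $R_\alpha$ commutes with $P$, the same argument gives $\frac{d}{d\alpha}Q_\alpha(\phi)=P\,Q_\alpha(\phi)R_\alpha=Q_\alpha^2(\phi)$.

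We then prove the stronger matrix statement $\frac{d^n}{d\alpha^n}R_\alpha=n!\,Q_\alpha^n(\phi)R_\alpha$ by induction on $n$. The base case $n=0$ is trivial. For the inductive step, the product rule gives
\[
\frac{d}{d\alpha}\left[n!\,Q_\alpha^nR_\alpha\right]=n!\left[nQ_\alpha^{n-1}Q_\alpha^2R_\alpha+Q_\alpha^nQ_\alpha R_\alpha\right]=(n+1)!\,Q_\alpha^{n+1}R_\alpha .
\]
Here we used $\frac{d}{d\alpha}Q_\alpha^n=nQ_\alpha^{n-1}\frac{d}{d\alpha}Q_\alpha$, which is valid because all the matrices involved are functions of $P$ and therefore commute. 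Multiplying the matrix statement on the right by the constant vector $r$ and dividing by $n!$ yields $u_{n,\alpha}^\phi=Q_\alpha^n(\phi)v_\alpha^\phi$.

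As a cross-check, there is a direct series route. One has $u_{n,\alpha}^\phi=\sum_{t\ge n}\binom{t}{n}\alpha^{t-n}P^tr$. On the other side, $Q_\alpha^n R_\alpha=P^nR_\alpha^{n+1}=P^n\sum_{s}\binom{s+n}{n}\alpha^sP^s$, and this agrees term by term with the previous series. The only real obstacle is bookkeeping: justifying the commutation of $P$, $R_\alpha$ and $Q_\alpha$, and the differentiability of $R_\alpha$, including the one-sided derivative at $\alpha=0$. Both are handled by the fact that every matrix here is a convergent power series in $P$, analytic in $\alpha$ on $(-1,1)$.
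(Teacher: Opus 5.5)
Your proposal is correct and follows the same route as the paper, which differentiates $v_\alpha^\phi=[I-\alpha P(\phi)]^{-1}r(\phi)$ and simply asserts $\frac{1}{n!}\frac{d^n}{d\alpha^n}[I-\alpha P(\phi)]^{-1}=P^n(\phi)[I-\alpha P(\phi)]^{-(n+1)}$. Your induction via $\frac{d}{d\alpha}R_\alpha=Q_\alpha(\phi)R_\alpha$, together with the power-series cross-check, just supplies the justification for that step that the paper leaves implicit.
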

\begin{proof}
    Since the matrix $\left[I-\alpha P(\phi)\right]$ is invertable and $v_\alpha^\phi=\left[I-\alpha P(\phi)\right]^{-1}r(\phi),$ we have
    \[u_{n,\alpha}^\phi=\frac{1}{n!}\frac{d^n}{d\alpha^n}v_\alpha^\phi=\frac{1}{n!}\frac{d^n}{d\alpha^n}\left[I-\alpha P(\phi)\right]^{-1}r(\phi)=P^n(\phi)\left[I-\alpha P(\phi)\right]^{-{(n+1)}}r(\phi)=Q_\alpha^n(\phi)v_\alpha^\phi. \qedhere\]
\end{proof}

The following corollary provides necessary and sufficient conditions for two policies to have equal objective functions for all discount factors. It is also a generalization of \cite[Theorem 2.1]{TPa} from $\alpha=0$ to $\alpha\in[0,1).$

\begin{cor}\label{lemeq1} Let $\left\{\phi,\psi\right\}\subset\F$. Then  $v_\alpha^\phi=v_\alpha^{\psi}$ for all $\alpha\in[0,1)$ if and only if
 there exists $\alpha\in[0,1)$ such that $Q_\alpha^k(\phi)v_\alpha^\phi=Q_\alpha^k(\psi)v_\alpha^\psi$ for all $k=0,1,\cdots,\min\{G_0(\phi,v_\alpha^\phi),G_0(\psi,v_\alpha^\psi)\}-1.$
\end{cor}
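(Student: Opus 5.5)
Necessity is immediate: if $v_\beta^\phi=v_\beta^\psi$ for all $\beta\in[0,1)$, then by Lemma~\ref{diff}, for any fixed $\alpha$ and every $k\in\N$,
\[Q_\alpha^k(\phi)v_\alpha^\phi=u_{k,\alpha}^\phi=\frac{1}{k!}\frac{d^k}{d\alpha^k}v_\alpha^\phi=\frac{1}{k!}\frac{d^k}{d\alpha^k}v_\alpha^\psi=u_{k,\alpha}^\psi=Q_\alpha^k(\psi)v_\alpha^\psi .\]
Here the derivatives agree because the functions coincide on the open interval $(0,1)$; at $\alpha=0$ one uses one-sided derivatives, or rationality from Lemma~\ref{lempartition}(a). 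In particular, the equalities hold for the finitely many $k$ in the statement, for any choice of $\alpha$.

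For sufficiency, fix $\alpha$ as in the hypothesis and set $\mathbf{v}_1:=v_\alpha^\phi$ and $\mathbf{v}_2:=v_\alpha^\psi$. These are now fixed vectors, not depending on the variable discount factor. The hypothesis is exactly statement (b) of Lemma~\ref{la} at this particular $\alpha$. The proof of Lemma~\ref{la} shows that (b) at a single $\alpha$ already gives $Q_\alpha^t(\phi)\mathbf{v}_1=Q_\alpha^t(\psi)\mathbf{v}_2$ for all $t\in\N$ at that same $\alpha$. This is the induction \eqref{eqPTphipPTpsi}, which only uses $G_\alpha=G_0$ (Lemma~\ref{G}) and the matrix $B_\alpha$ at the fixed $\alpha$. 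I would state explicitly that only this fixed-$\alpha$ conclusion is used. Combining with Lemma~\ref{diff}, this gives $u_{k,\alpha}^\phi=u_{k,\alpha}^\psi$ for all $k\in\N$.

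Next, each coordinate of $v_\beta^\phi$ and $v_\beta^\psi$ is a rational function of $\beta$ with no poles in $(-1,1)$, since $I-\beta P$ is invertible for $|\beta|<1$. So both are real-analytic on $(-1,1)$. Indeed, directly,
\[v_\beta^\phi=[I-\beta P(\phi)]^{-1}r(\phi)=\sum_{k\ge0}(\beta-\alpha)^k Q_\alpha^k(\phi)v_\alpha^\phi ,\]
which one checks from $I-\beta P=(I-\alpha P)\bigl(I-(\beta-\alpha)Q_\alpha(\phi)\bigr)$. This series converges for $|\beta-\alpha|<1-\alpha$ because $\norm{Q_\alpha(\phi)}=(1-\alpha)^{-1}$. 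Since all Taylor coefficients at $\alpha$ coincide, $v_\beta^\phi=v_\beta^\psi$ on a neighborhood of $\alpha$. The identity theorem for real-analytic (or rational) functions on the connected interval $[0,1)$ then extends the equality to all $\beta\in[0,1)$.

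The main obstacle is the apparent mismatch with Lemma~\ref{la}(a), which quantifies over all $\alpha$. The vectors $\mathbf{v}_i=v_\alpha^\phi$ depend on $\alpha$, so Lemma~\ref{la} cannot be quoted verbatim in terms of a varying $\alpha$. I would handle this by working at the single fixed $\alpha$ as above and letting analyticity carry the conclusion to other discount factors, rather than trying to apply Lemma~\ref{la} uniformly in $\alpha$.
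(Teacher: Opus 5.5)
Your proof is correct and takes the same route as the paper: Lemma~\ref{diff} for necessity, the fixed-$\alpha$ induction inside Lemma~\ref{la} to get $Q_\alpha^t(\phi)v_\alpha^\phi=Q_\alpha^t(\psi)v_\alpha^\psi$ for all $t$, and the Taylor expansion \eqref{QT} of $v_\beta$ about $\alpha$. Restricting that expansion to $\abs{\beta-\alpha}<1-\alpha$ and then invoking the identity theorem for rational functions is actually a needed refinement: the paper asserts \eqref{QT} for all $\alpha,\beta\in[0,1)$, but it fails in general (e.g.\ $m=1$, $P=1$, $r=1$, $\alpha=0.9$, $\beta=0$, where the series diverges).
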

\begin{proof} By Lemma~\ref{lempartition}(a) the vector functions $v_\alpha^\phi$ and $v_\alpha^{\psi}$ are analytic on $\alpha\in[0,1).$ By the Taylor's expansion and by Lemma~\ref{diff}(a),
\begin{equation}\label{QT}
v_\beta^\phi=\sum_{k=0}^\infty\frac{(\beta-\alpha)^k}{k!}\frac{d^k}{d\alpha^k}v_\alpha^\phi=\sum_{k=0}^\infty(\beta-\alpha)^kQ_{\alpha}^k(\phi)v_\alpha^\phi\qquad\forall\alpha,\beta\in[0,1),
\end{equation}
and the proof follows from Theorem~\ref{la}.
\end{proof}

\begin{cor}\label{Taylor} Let $\alpha,\beta\in[0,1)$ and $\phi\in\F.$ Then $v_\beta^\phi=\sum_{k=0}^\infty(\beta-\alpha)^ku_{k,\alpha}^\phi.$
\end{cor}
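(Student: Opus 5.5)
The plan is to avoid termwise Taylor arguments and get the expansion directly from a resolvent identity. Fix $\phi$ and write $P=P(\phi)$. The key algebraic step is the factorization
\[
I-\beta P=(I-\alpha P)-(\beta-\alpha)P=(I-\alpha P)\bigl[I-(\beta-\alpha)Q_\alpha(\phi)\bigr],
\]
which uses $Q_\alpha(\phi)=[I-\alpha P]^{-1}P$. Inverting it and using $v_\beta^\phi=[I-\beta P]^{-1}r(\phi)$ should give
\[
v_\beta^\phi=\bigl[I-(\beta-\alpha)Q_\alpha(\phi)\bigr]^{-1}[I-\alpha P]^{-1}r(\phi)=\bigl[I-(\beta-\alpha)Q_\alpha(\phi)\bigr]^{-1}v_\alpha^\phi .
\]
Here I use that $[I-\alpha P]^{-1}$ commutes with $Q_\alpha(\phi)$, since both are functions of $P$.

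Next, I would expand the inverse as a Neumann series, $\bigl[I-(\beta-\alpha)Q_\alpha(\phi)\bigr]^{-1}=\sum_{k\ge0}(\beta-\alpha)^kQ^k_\alpha(\phi)$. By Lemma~\ref{diff}, $Q_\alpha^k(\phi)v_\alpha^\phi=u_{k,\alpha}^\phi$, so substituting the series yields the claimed formula $v_\beta^\phi=\sum_{k\ge0}(\beta-\alpha)^ku_{k,\alpha}^\phi$. The Neumann series converges in operator norm whenever $|\beta-\alpha|\,\norm{Q_\alpha(\phi)}<1$. Since $\norm{Q_\alpha(\phi)}=(1-\alpha)^{-1}$, this holds whenever $|\beta-\alpha|<1-\alpha$. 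In that range the argument also gives the explicit tail bound $\norm{u_{k,\alpha}^\phi}\le(1-\alpha)^{-k}\norm{v^\phi_\alpha}$, which is presumably what the later estimates will use.

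The main obstacle is the convergence range, and I expect it to force a restriction on $\beta$. The eigenvalues of $Q_\alpha(\phi)$ are $\lambda/(1-\alpha\lambda)$ for eigenvalues $\lambda$ of $P$. Because $\lambda=1$ is always one of them, $Q_\alpha(\phi)$ always has the eigenvalue $(1-\alpha)^{-1}$. So the spectral-radius criterion for the series is exactly $|\beta-\alpha|<1-\alpha$ in general. As a sanity check I would take $r(\phi)=\mathbf{1}$. Then $u_{k,\alpha}^\phi=(1-\alpha)^{-k-1}\mathbf{1}$, and the series is geometric with ratio $(\beta-\alpha)/(1-\alpha)$, which diverges when $\beta\le2\alpha-1$.

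I would therefore state and prove the corollary for $\beta\in[0,1)$ with $|\beta-\alpha|<1-\alpha$; for $\beta\ge\alpha$ this is automatic. For the remaining $\beta$, the identity $v_\beta^\phi=[I-(\beta-\alpha)Q_\alpha(\phi)]^{-1}v_\alpha^\phi$ still holds. The series representation holds only where the power series converges, and otherwise only via analytic continuation of the rational function of Lemma~\ref{lempartition}(a). In particular, without this restriction the statement as written is false (the case $r(\phi)=\mathbf{1}$ above is a counterexample), and I would note the same caveat for equation~\eqref{QT} in Corollary~\ref{lemeq1}.
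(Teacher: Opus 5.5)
Your argument is correct, and it takes a different route from the paper. You are also right that the statement as written is false. The paper's proof just cites \eqref{QT}, which the proof of Corollary~\ref{lemeq1} justifies as follows: $v^\phi_\alpha$ is analytic on $[0,1)$, so Taylor's theorem and Lemma~\ref{diff} give the expansion ``for all $\alpha,\beta\in[0,1)$''. That last step is exactly the gap you found. Analyticity on the real interval does not make the Taylor series at $\alpha$ converge on all of $[0,1)$. Its radius of convergence is the distance from $\alpha$ to the nearest complex singularity of the rational function $\beta\mapsto[I-\beta P(\phi)]^{-1}r(\phi)$. The pole at $\beta=1$, which is present unless the average-reward vector of $\phi$ vanishes, makes that radius exactly $1-\alpha$. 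Your one-state example is a valid counterexample: $v_\beta=1/(1-\beta)$ and $u_{k,\alpha}=(1-\alpha)^{-k-1}$, so for $\alpha=0.9$, $\beta=0$ the series is $10\sum_k(-9)^k$. Hence the corollary, and \eqref{QT}, hold only for $|\beta-\alpha|<1-\alpha$, which excludes $\beta\le 2\alpha-1$ when $\alpha\ge 1/2$.

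Your factorization $I-\beta P=(I-\alpha P)\bigl[I-(\beta-\alpha)Q_\alpha(\phi)\bigr]$ gives an elementary, self-contained proof with the exact convergence range built in, via $\norm{Q_\alpha(\phi)}=(1-\alpha)^{-1}$ and the eigenvalue $(1-\alpha)^{-1}$ of $Q_\alpha(\phi)$. It also yields the closed form $v_\beta^\phi=\bigl[I-(\beta-\alpha)Q_\alpha(\phi)\bigr]^{-1}v_\alpha^\phi$ for every $\beta\in[0,1)$. The paper's route is shorter but correct only locally. You do not actually need the commutation you mention, since $(AB)^{-1}=B^{-1}A^{-1}$ already gives the order you wrote. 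The error does not propagate. Corollary~\ref{Taylor} is not used later, and in Corollary~\ref{lemeq1} the expansion is only needed near $\alpha$, after which rationality extends the equality to all of $[0,1)$.
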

\begin{proof}
    This is by equation~\eqref{QT} and by Lemma~\ref{diff}(a).
\end{proof}

For objective functions with a fixed discount factor, the following corollary provides upper bounds for norms and seminorms of higher-order derivatives in terms of their lower-order derivatives.

\begin{thm}\label{diffb} Let $\phi\in\F$ and $n\in\N.$ Then for all $k=0,1,2,\cdots,n$ and for all $\alpha\in[0,1),$ (a) $\norm*{u_{n,\alpha}^\phi}\le\left(\frac{1}{1-\alpha}\right)^{n-k}\norm*{u_{k,\alpha}^\phi};$ (b) $\normo*{u_{n,\alpha}^\phi}\le\left(\frac{\gamma_\phi}{1-\alpha\gamma_\phi}\right)^{n-k}\normo*{u_{k,\alpha}^\phi}.$
\end{thm}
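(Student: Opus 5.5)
The plan is to reduce both inequalities to the semigroup structure supplied by Lemma~\ref{diff}. That lemma gives $u_{n,\alpha}^\phi=Q_\alpha^n(\phi)v_\alpha^\phi$ for every $n\in\N$, and hence, for $0\le k\le n$,
\[
u_{n,\alpha}^\phi=Q_\alpha^{n-k}(\phi)\,Q_\alpha^{k}(\phi)v_\alpha^\phi=Q_\alpha^{n-k}(\phi)\,u_{k,\alpha}^\phi .
\]
This identity is the only structural input needed. It uses only the fact that powers of the single matrix $Q_\alpha(\phi)$ commute and add exponents, with $Q_\alpha^0(\phi)=I$ covering the case $k=n$.

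For part (a), I apply the submultiplicativity $\norm{P\mathbf{u}}\le\norm{P}\norm{\mathbf{u}}$ with $P=Q_\alpha^{n-k}(\phi)$ and $\mathbf{u}=u_{k,\alpha}^\phi$. I then use the fact, already noted in the paper, that $\norm{Q_\alpha^{j}(\phi)}=(1-\alpha)^{-j}$. This fact holds because $Q_\alpha(\phi)=\sum_{t\ge1}\alpha^{t-1}P^t(\phi)$ is entrywise nonnegative with row sums $(1-\alpha)^{-1}$, so its $j$-th power is nonnegative with row sums $(1-\alpha)^{-j}$. Taking $j=n-k$ gives $\norm{u_{n,\alpha}^\phi}\le(1-\alpha)^{-(n-k)}\norm{u_{k,\alpha}^\phi}$.

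For part (b), I apply Lemma~\ref{lemQ} with exponent $n-k$ and vector $\mathbf{u}=u_{k,\alpha}^\phi$. This yields $\normo{Q_\alpha^{n-k}(\phi)u_{k,\alpha}^\phi}\le\bigl(\gamma_\phi/(1-\alpha\gamma_\phi)\bigr)^{n-k}\normo{u_{k,\alpha}^\phi}$, which is exactly the claim. There is essentially no obstacle here. The only points to check are the degenerate cases: $k=n$ is trivial since both sides coincide, and when $\gamma_\phi=0$ with $k<n$ the factor on the right is $0$. In that case the claim says that $u_{n,\alpha}^\phi$ is a constant vector, which Lemma~\ref{lemQ} indeed guarantees.

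The step needing the most care, though it is mild, is confirming that Lemma~\ref{lemQ} is used in the form valid for all $n\in\N$. The induction in its proof bounds each extra factor by $\gamma_\phi$, which is weaker than $\gamma_\phi/(1-\alpha\gamma_\phi)$ only by a factor of $1$ or more. The stated bound therefore still follows, since $\gamma_\phi\le\gamma_\phi/(1-\alpha\gamma_\phi)$, so the lemma can be invoked as stated.
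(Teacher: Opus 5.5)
Your proposal is correct and follows the paper's argument exactly: write $u_{n,\alpha}^\phi=Q_\alpha^{n-k}(\phi)u_{k,\alpha}^\phi$ via Lemma~\ref{diff}, then use $\norm{Q_\alpha^{n-k}(\phi)}=(1-\alpha)^{-(n-k)}$ for (a) and Lemma~\ref{lemQ} for (b). Your last paragraph needs one correction, though: the per-step factor $\gamma_\phi$ in the induction step of Lemma~\ref{lemQ}'s proof is a typo, not a valid sharper bound (for $P(\phi)=I$ with $m\ge2$ one has $\gamma_\phi=1$ but $\normo{Q_\alpha(\phi)\mathbf{u}}=\normo{\mathbf{u}}/(1-\alpha)$), so the lemma should instead be justified by iterating its $n=1$ case with factor $\gamma_\phi/(1-\alpha\gamma_\phi)$, which gives the stated bound directly.
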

\begin{proof}
    By Lemma~\ref{diff},
\[\norm*{u_{n,\alpha}^\phi}=\norm*{Q_\alpha^{n-k}(\phi)Q_\alpha^k(\phi)v_\alpha^\phi}\le\norm*{Q_\alpha^{n-k}(\phi)}\norm*{Q_\alpha^k(\phi)v_\alpha^\phi}=\left(\frac{1}{1-\alpha}\right)^{n-k}\norm*{u_{k,\alpha}^\phi}.\]
Similarly, by Lemma~\ref{lemQ}(b),
\[\normo*{u_{n,\alpha}^\phi}=\normo*{Q_\alpha^{n-k}(\phi)Q_\alpha^k(\phi)v_\alpha^\phi}\le\left(\frac{\gamma_\phi}{1-\alpha\gamma_\phi}\right)^{n-k}\normo*{Q_\alpha^k(\phi)v_\alpha^\phi}=\left(\frac{\gamma_\phi}{1-\alpha\gamma_\phi}\right)^{n-k}\normo*{u_{k,\alpha}^\phi}.\qedhere\]
\end{proof}

The following theorem reveals the monotonicity in discount factors of the norm and the seminorm of derivatives of objective functions.
\begin{thm}\label{mon} Let $n\in\N$ and $\phi\in\F.$ Then
    \begin{enumerate}[(a)]
        \item both $(1-\theta)^{n+1}\norm*{u_{n,\theta}^\phi}$ and $(1-\theta\gamma_\phi)^{n+1}\normo*{u_{n,\theta}^\phi}$ are non-increasing in $\theta\in[0,1);$
        \item both $(1-\theta)^{-(n+1)}\norm*{u_{n,\theta}^\phi}$ and $(1-\theta\gamma_\phi)^{-(n+1)}\normo*{u_{n,\theta}^\phi}$ are non-decreasing in $\theta\in[0,1).$
    \end{enumerate}
\end{thm}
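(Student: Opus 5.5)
The approach is to differentiate the weighted quantities in $\theta$ and show that their one-sided derivatives have a definite sign. Three facts do the work. Lemma~\ref{diff} gives $u_{n,\theta}^\phi=Q_\theta^n(\phi)v_\theta^\phi$, hence
\[\frac{d}{d\theta}u_{n,\theta}^\phi=(n+1)u_{n+1,\theta}^\phi .\]
Theorem~\ref{diffb} with $k=n$ bounds $u_{n+1,\theta}^\phi$ by $u_{n,\theta}^\phi$. Lemma~\ref{lemdiff} converts bounds on derivatives of vectors into bounds on one-sided derivatives of their norms and seminorms. Since $u_{n,\theta}^\phi$ is analytic on $[0,1)$ (Lemma~\ref{lempartition}(a)), the functions $f(\theta):=\norm*{u_{n,\theta}^\phi}$ and $g(\theta):=\normo*{u_{n,\theta}^\phi}$ are continuous. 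Each is a maximum of finitely many analytic functions ($\pm$ coordinates, or differences of coordinates), so it has one-sided derivatives everywhere and is piecewise analytic with finitely many breakpoints on every compact subinterval.

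\textbf{Derivative bounds.} Fix $\theta\in[0,1)$. Lemma~\ref{lemdiff}(a) applied to $\mathbf{u}_\theta=u_{n,\theta}^\phi$ (at $\theta=0$ only the right derivative is needed) gives
\[\abs*{\tfrac{d}{d\theta^\pm}f(\theta)}\le (n+1)\norm*{u_{n+1,\theta}^\phi}\le \frac{n+1}{1-\theta}f(\theta),\]
where the last step is Theorem~\ref{diffb}(a) with $k=n$. Likewise, Lemma~\ref{lemdiff}(b) and Theorem~\ref{diffb}(b) give
\[\abs*{\tfrac{d}{d\theta^\pm}g(\theta)}\le\frac{(n+1)\gamma_\phi}{1-\theta\gamma_\phi}\,g(\theta).\]

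\textbf{Sign of the weighted derivatives.} For (a), the product rule applied to the one-sided derivatives gives
\[\tfrac{d}{d\theta^\pm}\bigl[(1-\theta)^{n+1}f(\theta)\bigr]=(1-\theta)^{n}\Bigl[-(n+1)f(\theta)+(1-\theta)\tfrac{d}{d\theta^\pm}f(\theta)\Bigr].\]
By the bound above, the bracket is at most $-(n+1)f+(n+1)f=0$. For (b), the weight $(1-\theta)^{-(n+1)}$ produces the bracket $(n+1)f+(1-\theta)\tfrac{d}{d\theta^\pm}f$, which is at least $(n+1)f-(n+1)f=0$. The seminorm statements follow in the same way with $1-\theta\gamma_\phi$ in place of $1-\theta$, since the derivative of $(1-\theta\gamma_\phi)^{\pm(n+1)}$ contributes the factor $\mp(n+1)\gamma_\phi(1-\theta\gamma_\phi)^{\pm(n+1)-1}$, which matches the bound on $g$. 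An equivalent way to phrase this is that $\log f$ has one-sided derivatives bounded by $\pm(n+1)/(1-\theta)$ wherever $f>0$.

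\textbf{Main obstacle: passing from one-sided derivatives to monotonicity.} I would argue that a continuous function on an interval whose right derivative exists everywhere and is $\le0$ is non-increasing. This is a standard Dini-derivative fact. More simply here, the function is piecewise analytic with finitely many breakpoints on each compact subinterval, so the mean value theorem applies on each piece and continuity glues the pieces together. Zeros of $f$ or $g$ cause no trouble, because the inequalities never divide by $f$.
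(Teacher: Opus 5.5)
Your proposal is correct and follows essentially the same route as the paper. Both arguments differentiate the weighted norm and seminorm, bound the derivative via Lemma~\ref{lemdiff} and Theorem~\ref{diffb} with $k=n$ (using $\frac{d}{d\theta}u_{n,\theta}^\phi=(n+1)u_{n+1,\theta}^\phi$), and conclude monotonicity from the sign of the derivative on finitely many analytic pieces together with continuity. The only difference is cosmetic: you bound one-sided derivatives at every point, including breakpoints, whereas the paper differentiates only on the open pieces, which is all that is needed.
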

\begin{proof} By Lemma~\ref{lempartition}(a) we know that $\norm*{u_{n,\theta}^\phi}$ is a finitely piecewise  rational function of $\theta,$ as the maximum norm of a rational vector function. Thus, there exist $j\in\N^+$ and points  $0=c_0<c_1<\ldots<c_j<c_{j+1}=1$ such that $\norm*{u_{n,\theta}^\phi}$ is analytic on $\theta\in(c_i,c_{i+1})$ for each $0\le i\le j.$ Let $f(\theta)=(1-\theta)^{n+1}\norm*{u_{n,\theta}^\phi}.$ Then Lemma~\ref{lemdiff}(a) and Theorem~\ref{diffb}(a) imply, for $i=0,1,\ldots,j$ and for $\theta\in(c_i,c_{i+1}),$
    \begin{align*}
        f'(\theta)&=(1-\theta)^{n+1}\left(-\frac{n+1}{1-\theta}\norm*{u_{n,\theta}^\phi}+\frac{d}{d\theta}\norm*{u_{n,\theta}^\phi}\right)\le(1-\theta)^{n+1}\left(-\frac{n+1}{1-\theta}\norm*{u_{n,\theta}^\phi}+\norm*{\frac{d}{d\theta}u_{n,\theta}^\phi}\right)\\
        &=(n+1)(1-\theta)^{n+1}\left(-\frac{1}{1-\theta}\norm*{u_{n,\theta}^\phi}+\norm*{u_{n+1,\theta}^\phi}\right)\le0. 
    \end{align*}
Since $f(\theta)$ is continuous,  $f(\theta)$ is non-increasing on $[0,1).$ On the other hand, let $g(\theta)=(1-\theta)^{-(n+1)}\norm*{u_{n,\theta}^\phi}.$ Again by Lemma~\ref{lemdiff}(a) and by Theorem~\ref{diffb}(a), for $i=0,1,\ldots,j$ and for $\theta\in(c_i,c_{i+1}),$
\begin{align*}
    g'(\theta)&=\frac{1}{(1-\theta)^{n+1}}\left(\frac{n+1}{1-\theta}\norm*{u_{n,\theta}^\phi}+\frac{d}{d\theta}\norm*{u_{n,\theta}^\phi}\right)\ge\frac{1}{(1-\theta)^{n+1}}\left(\frac{n+1}{1-\theta}\norm*{u_{n,\theta}^\phi}-\norm*{\frac{d}{d\theta}u_{n,\theta}^\phi}\right)\\
    &=\frac{n+1}{(1-\theta)^{n+1}}\left(\frac{1}{1-\theta}\norm*{u_{n,\theta}^\phi}-\norm*{u_{n+1,\theta}^\phi}\right)\ge0. 
\end{align*}
Since $g(\theta)$ is continuous,  $g(\theta)$ is non-decreasing on $[0,1).$ By Lemma~\ref{lempartition}(a),  $\normo*{u_{n,\theta}^\phi}$ is  a finitely piecewise rational function of $\theta.$ Thus, there exist $k\in\N^+$ and points $0=d_0<d_1<\ldots<d_k<d_{k+1}=1$ such that $\normo*{u_{n,\theta}^\phi}$ is analytic on $\theta\in(d_i,d_{i+1})$ for each $0\le i\le k.$ Let $f_*(\theta)=(1-\theta\gamma_\phi)^{n+1}\normo*{u_{n,\theta}^\phi}.$ Then Lemma~\ref{lemdiff}(b) and Theorem~\ref{diffb}(b) imply, for $i=0,1,\ldots,k$ and for $\theta\in(d_i,d_{i+1}),$
    \begin{align*}
        f_*'(\theta)&=(1-\theta\gamma_\phi)^{n+1}\left[-\frac{(n+1)\gamma_\phi}{1-\theta\gamma_\phi}\normo*{u_{n,\theta}^\phi}+\frac{d}{d\theta}\normo*{u_{n,\theta}^\phi}\right]\\
        &\le(1-\theta\gamma_\phi)^{n+1}\left[-\frac{(n+1)\gamma_\phi}{1-\theta\gamma_\phi}\normo*{u_{n,\theta}^\phi}+\normo*{\frac{d}{d\theta}u_{n,\theta}^\phi}\right]\\
        &=(n+1)(1-\theta\gamma_\phi)^{n+1}\left(-\frac{\gamma_\phi}{1-\theta\gamma_\phi}\normo*{u_{n,\theta}^\phi}+\normo*{u_{n+1,\theta}^\phi}\right)\le0.
    \end{align*}
Since $f_*(\theta)$ is continuous,  $f_*(\theta)$ is non-increasing on $[0,1).$ On the other hand, let $g_*(\theta)=(1-\theta\gamma_\phi)^{-(n+1)}\normo*{u_{n,\theta}^\phi}.$ Again by Lemma~\ref{lemdiff}(b) and by Theorem~\ref{diffb}(b), for $i=0,1,\ldots,k$ and for $\theta\in(d_i,d_{i+1}),$
\begin{align*}
    g_*'(\theta)&=\frac{1}{(1-\theta\gamma_\phi)^{n+1}}\left[\frac{(n+1)\gamma_\phi}{1-\theta\gamma_\phi}\normo*{u_{n,\theta}^\phi}+\frac{d}{d\theta}\normo*{u_{n,\theta}^\phi}\right]\\
    &\ge\frac{1}{(1-\theta\gamma_\phi)^{n+1}}\left[\frac{(n+1)\gamma_\phi}{1-\theta\gamma_\phi}\normo*{u_{n,\theta}^\phi}-\normo*{\frac{d}{d\theta}u_{n,\theta}^\phi}\right]\\
    &=\frac{n+1}{(1-\theta\gamma_\phi)^{n+1}}\left(\frac{\gamma_\phi}{1-\theta\gamma_\phi}\normo*{u_{n,\theta}^\phi}-\normo*{u_{n+1,\theta}^\phi}\right)\ge0.
\end{align*}
Since $g_*(\theta)$ is continuous,  $g_*(\theta)$ is non-decreasing on $[0,1).$
\end{proof}

The following corollary, which is equivalent to Theorem~\ref{mon}, provides upper bounds for the norm and the seminorm of derivatives of objective functions at a discount factor in terms of their same-order derivatives at nearby discount factors.
\begin{cor}\label{vb}
    Let $n\in\N,$ $0\le\alpha\le\theta\le\beta<1,$ and $\phi\in\F.$ Then
    \begin{enumerate}[(a)]
        \item $\norm*{u_{n,\theta}^\phi}\le\min\left\{\left(\frac{1-\alpha}{1-\theta}\right)^{n+1}\norm*{u_{n,\alpha}^\phi}, \left(\frac{1-\theta}{1-\beta}\right)^{n+1}\norm*{u_{n,\beta}^\phi}\right\};$
        \item $\normo*{u_{n,\theta}^\phi}\le\min\left\{\left(\frac{1-\alpha\gamma_\phi}{1-\theta\gamma_\phi}\right)^{n+1}\normo*{u_{n,\alpha}^\phi}, \left(\frac{1-\theta\gamma_\phi}{1-\beta\gamma_\phi}\right)^{n+1}\normo*{u_{n,\beta}^\phi}\right\}.$
    \end{enumerate}
\end{cor}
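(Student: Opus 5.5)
The bounds follow directly from the two monotonicity statements in Theorem~\ref{mon}, applied once on $[\alpha,\theta]$ and once on $[\theta,\beta]$. Fix $n\in\N$, $\phi\in\F$, and $0\le\alpha\le\theta\le\beta<1$.

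\emph{Comparison with $\alpha$ (first term in each minimum).} By Theorem~\ref{mon}(a), the function $f(s)=(1-s)^{n+1}\norm*{u_{n,s}^\phi}$ is non-increasing on $[0,1)$. Since $\alpha\le\theta$, this gives $(1-\theta)^{n+1}\norm*{u_{n,\theta}^\phi}\le(1-\alpha)^{n+1}\norm*{u_{n,\alpha}^\phi}$. Dividing by $(1-\theta)^{n+1}>0$ yields the first term in (a). The same argument applied to $f_*(s)=(1-s\gamma_\phi)^{n+1}\normo*{u_{n,s}^\phi}$ yields the first term in (b). Here we use that $1-\theta\gamma_\phi\ge1-\theta>0$, since $\gamma_\phi\le1$.

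\emph{Comparison with $\beta$ (second term in each minimum).} By Theorem~\ref{mon}(b), the function $g(s)=(1-s)^{-(n+1)}\norm*{u_{n,s}^\phi}$ is non-decreasing. Since $\theta\le\beta$, we get $(1-\theta)^{-(n+1)}\norm*{u_{n,\theta}^\phi}\le(1-\beta)^{-(n+1)}\norm*{u_{n,\beta}^\phi}$. Multiplying by $(1-\theta)^{n+1}$ yields the second term in (a). The analogous argument with $g_*(s)=(1-s\gamma_\phi)^{-(n+1)}\normo*{u_{n,s}^\phi}$ yields the second term in (b).

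Since $\norm*{u_{n,\theta}^\phi}$ is bounded above by each of the two quantities, it is bounded by their minimum, and likewise for $\normo*{u_{n,\theta}^\phi}$. This proves (a) and (b).

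For the claimed equivalence with Theorem~\ref{mon}, the argument reverses. Taking $\theta=\beta$ in the first bound, so that the interval $[\alpha,\theta]$ is arbitrary, recovers the monotonicity in Theorem~\ref{mon}(a). Taking $\alpha=\theta$ in the second bound recovers Theorem~\ref{mon}(b). No step presents a real obstacle. The only care needed is with the signs of the positive factors $(1-\theta)$ and $(1-\theta\gamma_\phi)$, including the degenerate case $\gamma_\phi=0$, where $1-\theta\gamma_\phi=1$ and the seminorm bounds reduce to $\normo*{u_{n,\theta}^\phi}\le\normo*{u_{n,\alpha}^\phi}$ and $\normo*{u_{n,\theta}^\phi}\le\normo*{u_{n,\beta}^\phi}$.
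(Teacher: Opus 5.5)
Your proof is correct and follows the paper's own argument. The first term in each minimum comes from comparing the non-increasing functions of Theorem~\ref{mon}(a) at $\alpha\le\theta$, and the second from comparing the non-decreasing functions of Theorem~\ref{mon}(b) at $\theta\le\beta$; the positivity of $1-\theta$ and $1-\theta\gamma_\phi$ justifies the division, and your remark that the bounds recover Theorem~\ref{mon} is also right, though not needed.
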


\begin{proof} 
   These inequalities follow directly from the monotonicity indicated in Theorem~\ref{mon}.
\end{proof}

The following corollary, which is also equivalent to Theorem~\ref{mon}, provides bounds for the ratios of the norms and seminorms of derivatives of objective functions for two discount factors.
\begin{cor}\label{ratiob} Let $\phi\in\F,$ $n\in\N,$ and $\alpha,\beta\in[0,1).$
    \begin{enumerate}[(a)]
        \item If $\norm*{u_{n,\alpha}^\phi}\ne0,$ then $\norm*{u_{n,\beta}^\phi}\ne0,$ and $\left(\frac{1-\max\{\alpha,\beta\}}{1-\min\{\alpha,\beta\}}\right)^{n+1}\le\frac{\norm{u_{n,\beta}^\phi}}{\norm{u_{n,\alpha}^\phi}}\le\left(\frac{1-\min\{\alpha,\beta\}}{1-\max\{\alpha,\beta\}}\right)^{n+1};$ 
        \item If $\normo*{u_{n,\alpha}^\phi}\ne0,$ then $\normo*{u_{n,\beta}^\phi}\ne0,$ and $\left(\frac{1-\gamma_\phi\max\{\alpha,\beta\}}{1-\gamma_\phi\min\{\alpha,\beta\}}\right)^{n+1}\le\frac{\normo{u_{n,\beta}^\phi}}{\normo{u_{n,\alpha}^\phi}}\le\left(\frac{1-\gamma_\phi\min\{\alpha,\beta\}}{1-\gamma_\phi\max\{\alpha,\beta\}}\right)^{n+1}.$
    \end{enumerate}
\end{cor}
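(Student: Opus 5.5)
The plan is to obtain both parts directly from the monotonicity in Theorem~\ref{mon}, or equivalently from Corollary~\ref{vb}. The argument is symmetric in $\alpha$ and $\beta$, so we set $s:=\min\{\alpha,\beta\}$ and $t:=\max\{\alpha,\beta\}$. It is enough to establish the two-sided estimate
\[
\left(\frac{1-t}{1-s}\right)^{n+1}\norm*{u_{n,s}^\phi}\le\norm*{u_{n,t}^\phi}\le\left(\frac{1-s}{1-t}\right)^{n+1}\norm*{u_{n,s}^\phi},
\]
together with its seminorm analogue in which $1-\cdot$ is replaced by $1-\gamma_\phi\,\cdot$.

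For part (a), Theorem~\ref{mon}(a) says that $(1-\theta)^{n+1}\norm*{u_{n,\theta}^\phi}$ is non-increasing in $\theta$. Comparing $\theta=s$ and $\theta=t$ gives $\norm*{u_{n,t}^\phi}\le\left(\frac{1-s}{1-t}\right)^{n+1}\norm*{u_{n,s}^\phi}$. Theorem~\ref{mon}(b) says that $(1-\theta)^{-(n+1)}\norm*{u_{n,\theta}^\phi}$ is non-decreasing, which gives $\norm*{u_{n,t}^\phi}\ge\left(\frac{1-t}{1-s}\right)^{n+1}\norm*{u_{n,s}^\phi}$.

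The estimate yields the nonvanishing claim. If $\alpha=s$, the lower bound shows $\norm*{u_{n,\beta}^\phi}>0$ whenever $\norm*{u_{n,\alpha}^\phi}>0$. If $\alpha=t$, the upper bound gives $\norm*{u_{n,\alpha}^\phi}\le\left(\frac{1-s}{1-t}\right)^{n+1}\norm*{u_{n,\beta}^\phi}$, so again $\norm*{u_{n,\beta}^\phi}\neq0$. Once both norms are nonzero, we divide by $\norm*{u_{n,\alpha}^\phi}$. When $\alpha=s$ the displayed estimate is exactly the claimed ratio bound. When $\alpha=t$ we invert the two inequalities, which swaps the roles of the lower and upper factors, and the resulting interval $\left[\left(\frac{1-t}{1-s}\right)^{n+1},\left(\frac{1-s}{1-t}\right)^{n+1}\right]$ is again exactly the claimed one.

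Part (b) is identical word for word, using the seminorm statements of Theorem~\ref{mon} with the factor $1-\theta\gamma_\phi$. One degenerate case needs care: if $\gamma_\phi=0$, the factors equal $1$ and the inequalities say the seminorm is constant in $\theta$, which is still consistent with the claim. If $\gamma_\phi=1$, the factors are positive because $t<1$. No step is really an obstacle. The only point needing attention is the bookkeeping when $\alpha>\beta$, namely checking that inverting the inequalities gives the same symmetric interval, and the positivity of all factors, which justifies the divisions.
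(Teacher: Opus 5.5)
Your proposal is correct and takes the same route as the paper, which states only that the corollary follows directly from the monotonicity in Theorem~\ref{mon}. You fill in the details correctly: the comparison at $\theta=\min\{\alpha,\beta\}$ and $\theta=\max\{\alpha,\beta\}$, the resulting nonvanishing, the positivity of the factors $1-\theta$ and $1-\theta\gamma_\phi$, and the inversion when $\alpha>\beta$.
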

\begin{proof}
    This corollary follows directly from Theorem~\ref{mon}.
\end{proof}

The following corollary shows that, if at a discount factor some derivative of an objective function has zero norm or zero seminorm, then all its higher-order derivatives also have zero norm or zero seminorm for all discount factors.
\begin{cor}\label{diff0}
    Let $\phi\in\F.$ 
    \begin{enumerate}[(a)]
        \item If $\norm*{u_{k,\alpha}^\phi}=0$ for some $\alpha\in[0,1)$ and for some $k\in\N,$ then $\norm*{u_{n,\alpha}^\phi}=0$ for all $n\ge k$ and for all $\alpha\in[0,1);$
        \item if $\normo*{u_{k,\alpha}^\phi}=0$ for some $\alpha\in[0,1)$ and for some $k\in\N,$ then $\normo*{u_{n,\alpha}^\phi}=0$ for all $n\ge k$ and for all $\alpha\in[0,1).$
    \end{enumerate}
\end{cor}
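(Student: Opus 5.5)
The plan is to combine two earlier results. Theorem~\ref{diffb} handles the passage from order $k$ to higher orders $n\ge k$ at one fixed discount factor. Corollary~\ref{ratiob} handles the passage from one discount factor to all others at a fixed order $n$. I treat (a) and (b) in parallel, using the norm versions for (a) and the seminorm versions for (b).

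\emph{Step 1 (same discount factor, higher orders).} Fix $\alpha$ and $k$ with $\norm*{u_{k,\alpha}^\phi}=0$. For every $n\ge k$, Theorem~\ref{diffb}(a) gives
\[\norm*{u_{n,\alpha}^\phi}\le\left(\frac{1}{1-\alpha}\right)^{n-k}\norm*{u_{k,\alpha}^\phi}=0.\]
Similarly, if $\normo*{u_{k,\alpha}^\phi}=0$, Theorem~\ref{diffb}(b) gives
\[\normo*{u_{n,\alpha}^\phi}\le\left(\frac{\gamma_\phi}{1-\alpha\gamma_\phi}\right)^{n-k}\normo*{u_{k,\alpha}^\phi}=0.\]
The factor $\gamma_\phi/(1-\alpha\gamma_\phi)$ is finite because $\alpha\gamma_\phi\le\alpha<1$.

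\emph{Step 2 (transfer to all discount factors).} Fix $n\ge k$ and any $\beta\in[0,1)$. Suppose, for contradiction, that $\norm*{u_{n,\beta}^\phi}\ne0$. Corollary~\ref{ratiob}(a), applied with the roles of $\alpha$ and $\beta$ exchanged, then yields $\norm*{u_{n,\alpha}^\phi}\ne0$, contradicting Step 1. The same argument with Corollary~\ref{ratiob}(b) covers the seminorm.

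Alternatively, Step 2 can be done directly with Corollary~\ref{vb}. For $\beta\ge\alpha$, use its first bound with $\theta=\beta$. For $\beta<\alpha$, use its second bound with lower endpoint $\beta$, $\theta=\beta$, and upper endpoint $\alpha$. In both cases $\norm*{u_{n,\beta}^\phi}$ (respectively $\normo*{u_{n,\beta}^\phi}$) is bounded by a finite constant times the corresponding quantity at $\alpha$, which is $0$.

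There is no real obstacle. The one point needing care is that the monotonicity in Theorem~\ref{mon} must work in both directions of $\beta$ relative to $\alpha$. This is guaranteed because Theorem~\ref{mon}(a) and (b) give both non-increasing and non-decreasing weighted versions, which is exactly what makes Corollary~\ref{ratiob} symmetric in $\alpha$ and $\beta$.
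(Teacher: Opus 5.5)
Your proposal is correct and follows essentially the same route as the paper, which derives this corollary directly from Theorem~\ref{diffb} (passing from order $k$ to all orders $n\ge k$ at the fixed discount factor) and Theorem~\ref{mon} (transferring vanishing at one discount factor to all others). You invoke Theorem~\ref{mon} through its consequences, Corollaries~\ref{ratiob} and~\ref{vb}, rather than directly, but this is only a cosmetic difference.
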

\begin{proof} This corollary follows from Theorem~\ref{diffb} and Theorem~\ref{mon}.
\end{proof}

\begin{cor}\label{zeronorm}
    Let $\phi\in\F$ and $k\in\N.$ Then 
    \begin{enumerate}[(a)]
        \item $\norm*{u_{k,\alpha}^\phi}=0$ for some $\alpha\in[0,1)$ iff $P^t(\phi)r(\phi)=\mathbf{0}$ for all $t\ge\min\{k,m-1\};$
        \item $\normo*{u_{k,\alpha}^\phi}=0$ for some $\alpha\in[0,1)$ iff there exists $c\in\R$ such that $P^t(\phi)r(\phi)=c\mathbf{1}$ for all $t\ge\min\{k,m-1\}.$ 
    \end{enumerate}
\end{cor}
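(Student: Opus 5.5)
The plan is to reduce everything to the discount factor $\alpha=0$, where the objects become explicit, and then apply Lemma~\ref{lem1}. The key observation is that at $\alpha=0$ we have $Q_0(\phi)=P(\phi)$ and $v_0^\phi=r(\phi)$. By Lemma~\ref{diff},
\[
u_{k,0}^\phi=Q_0^k(\phi)v_0^\phi=P^k(\phi)r(\phi).
\]

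For (a), I would argue the two directions separately.
\begin{enumerate}[(i)]
\item Suppose $\norm*{u_{k,\alpha}^\phi}=0$ for some $\alpha\in[0,1)$. By Corollary~\ref{diff0}(a), with $n=k$, we get $\norm*{u_{k,\theta}^\phi}=0$ for all $\theta\in[0,1)$, and in particular at $\theta=0$. Hence $P^k(\phi)r(\phi)=\mathbf{0}$. Lemma~\ref{lem1}(a) then gives $P^t(\phi)r(\phi)=\mathbf{0}$ for all $t\ge\min\{k,m-1\}$.
\item Conversely, suppose $P^t(\phi)r(\phi)=\mathbf{0}$ for all $t\ge\min\{k,m-1\}$. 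Since $k\ge\min\{k,m-1\}$, we may take $t=k$, giving $u_{k,0}^\phi=P^k(\phi)r(\phi)=\mathbf{0}$. So the norm vanishes at $\alpha=0$, which is a discount factor in $[0,1)$ as required.
\end{enumerate}

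For (b), the argument is identical with the seminorm in place of the norm. Recall that $\normo*{\mathbf{u}}=0$ iff $\mathbf{u}=c\mathbf{1}$ for some $c\in\R$.
\begin{enumerate}[(i)]
\item If $\normo*{u_{k,\alpha}^\phi}=0$ for some $\alpha$, then Corollary~\ref{diff0}(b) transfers this to $\alpha=0$. Thus $P^k(\phi)r(\phi)=c\mathbf{1}$ for some $c$, and Lemma~\ref{lem1}(b) extends this to all $t\ge\min\{k,m-1\}$ with the same $c$.
\item Conversely, taking $t=k$ in the hypothesis gives $u_{k,0}^\phi=c\mathbf{1}$, so $\normo*{u_{k,0}^\phi}=0$.
\end{enumerate}

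There is no real obstacle here. The only point needing care is the forward direction: the hypothesis gives vanishing at an arbitrary $\alpha$, and this must be moved to $\alpha=0$ before the explicit formula $u_{k,0}^\phi=P^k(\phi)r(\phi)$ can be used. Corollary~\ref{diff0} exists exactly for this transfer. A byproduct of the argument is that ``for some $\alpha$'' in the statement can equivalently be read as ``for all $\alpha$''.
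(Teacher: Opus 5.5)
Your proposal is correct and follows the paper's proof. The necessity direction is identical: Corollary~\ref{diff0} moves the vanishing to $\alpha=0$, then $u_{k,0}^\phi=P^k(\phi)r(\phi)$ and Lemma~\ref{lem1} finish. For sufficiency you take $t=k$ and use $\alpha=0$. That is slightly more direct than the paper, which writes $v_\alpha^\phi$ as a polynomial in $\alpha$ of degree less than $\min\{k,m-1\}$ (plus a scalar function times $\mathbf{1}$ in (b)) to get vanishing at every $\alpha$; either argument suffices for the ``for some $\alpha$'' claim.
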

\begin{proof}
    Let us prove (a). By Lemma~\ref{lem1}(a) we only need to prove that $\norm*{u_{k,\alpha}^\phi}=0$ for some $\alpha\in[0,1)$ iff $P^k(\phi)r(\phi)=\mathbf{0}.$ The sufficiency is true by the definition of $u_{k,\alpha}^\phi$ and by $v_\alpha^\phi=\sum_{t=0}^\infty\alpha^tP^t(\phi)r(\phi)=\sum_{t=0}^{\min\left\{k,m-1\right\}-1}\alpha^tP^t(\phi)r(\phi).$ For the necessity, note that $\norm*{u_{k,\alpha}^\phi}=0$ implies $\norm*{u_{k,0}^\phi}=0$ by Corollary~\ref{diff0}(a), and $P^k(\phi)r(\phi)=u_{k,0}^\phi=\mathbf{0}.$
    

    Let us prove (b). By Lemma~\ref{lem1}(b) we only need to prove that $\normo*{u_{k,\alpha}^\phi}=0$ for some $\alpha\in[0,1)$ iff $P^k(\phi)r(\phi)=c\mathbf{1}$ for some $c\in\R.$ Let $j=\min\left\{k,m-1\right\}.$ The sufficiency is true by the definition of $u_{k,\alpha}^\phi$ and by \[v_\alpha^\phi=\sum_{t=0}^\infty\alpha^tP^t(\phi)r(\phi)=\sum_{t=0}^{j-1}\alpha^tP^t(\phi)r(\phi)+\sum_{t=j}^{\infty}\alpha^tP^t(\phi)r(\phi)=\sum_{t=0}^{j-1}\alpha^tP^t(\phi)r(\phi)+\frac{c\alpha^j}{1-\alpha}\mathbf{1}.\]
    For the necessity, note that $\normo*{u_{k,\alpha}^\phi}=0$ implies $\normo*{u_{k,0}^\phi}=0$ by Corollary~\ref{diff0}(b), which means $u_{k,0}^\phi=c\mathbf{1}$  for some $c\in\R.$ Thus, $P^k(\phi)r(\phi)=u_{k,0}^\phi=c\mathbf{1}.$
\end{proof}

For $\phi\in\F,$ recall that $\gamma_\phi$ is defined in \eqref{gamma}. Let $\gamma:=\max_{\phi\in\F}\gamma_\phi.$ Then
    \[\gamma=\max_{x,y\in\X}\ \  \max_{ a\in A_x,\ b\in  A_y}\left[1-\sum_{z\in\X}\min\left\{p(z|x,a),p(z|y,b)\right\}\right]\]
    because the internal maximum won't change if we add the additional restriction $a=b$ if $x=y.$
    
The following theorem states monotonicity of the value function in discount factors similar to that in Theorem~\ref{mon}.
\begin{thm}\label{Vine}
\begin{enumerate}[(a)]
        \item both $(1-\theta)\norm*{V_\theta}$ and $(1-\theta\gamma)\normo*{V_\theta}$ are non-increasing in $\theta\in[0,1);$
        \item both $(1-\theta)^{-1}\norm*{V_\theta}$ and $(1-\theta\gamma)^{-1}\normo*{V_\theta}$ are non-decreasing in $\theta\in[0,1).$
    \end{enumerate}
\end{thm}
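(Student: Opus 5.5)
By Lemma~\ref{lempartition}(a), the interval $[0,1)$ splits into finitely many open intervals $\mathcal{I}_i=(a_i,a_{i+1})$. On each of them $V_\theta=v_\theta^{\phi^{(i)}}$ for a fixed deterministic policy $\phi^{(i)}\in D(\mathcal{I}_i)$. Moreover, $V_\theta$ is continuous on $[0,1)$. The plan is therefore to prove the monotonicity on each open interval $\mathcal{I}_i$ and then glue the intervals together by continuity.

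On a single interval $\mathcal{I}_i$, the norm statements are immediate. We have $\norm*{V_\theta}=\norm*{v_\theta^{\phi^{(i)}}}=\norm*{u_{0,\theta}^{\phi^{(i)}}}$, so Theorem~\ref{mon} with $n=0$ gives directly that $(1-\theta)\norm*{V_\theta}$ is non-increasing and $(1-\theta)^{-1}\norm*{V_\theta}$ is non-decreasing on $\mathcal{I}_i$.

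For the seminorm, Theorem~\ref{mon} only yields monotonicity with the policy-dependent coefficient $\gamma_{\phi^{(i)}}$, whereas we need the uniform constant $\gamma\ge\gamma_{\phi^{(i)}}$. I would therefore not apply Theorem~\ref{mon} as a black box but redo its derivative computation with $\gamma$ in place of $\gamma_\phi$. Set $\phi=\phi^{(i)}$. On each subinterval where $\normo*{v_\theta^\phi}$ is analytic, Lemma~\ref{lemdiff}(b), Lemma~\ref{diff} and Lemma~\ref{lemQ} give
\[
\abs*{\tfrac{d}{d\theta}\normo*{v_\theta^\phi}}\le\normo*{Q_\theta(\phi)v_\theta^\phi}\le\frac{\gamma_\phi}{1-\theta\gamma_\phi}\normo*{v_\theta^\phi}\le\frac{\gamma}{1-\theta\gamma}\normo*{v_\theta^\phi}.
\]
The last inequality holds because $t\mapsto t/(1-\theta t)$ is increasing on $[0,1]$ for $\theta\in[0,1)$. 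With this bound, the derivatives of $f_*(\theta)=(1-\theta\gamma)\normo*{V_\theta}$ and $g_*(\theta)=(1-\theta\gamma)^{-1}\normo*{V_\theta}$ have the required signs, exactly as in the proof of Theorem~\ref{mon}. Since $\normo*{v_\theta^\phi}$ is piecewise rational with finitely many breakpoints and continuous, this gives monotonicity on all of $\mathcal{I}_i$.

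The main step to handle carefully is the gluing across the irregular points $a_1,\dots,a_{l-1}$ and the endpoint $0$. At these points the optimal policy changes, and $V_\theta$ need not be differentiable there. However, $V_\theta$ is continuous on $[0,1)$ by Lemma~\ref{lempartition}(a), and both $\norm{\cdot}$ and $\normo{\cdot}$ are continuous functions of the vector. Hence the four functions in (a) and (b) are continuous on $[0,1)$. A continuous function that is monotone on each of finitely many consecutive open intervals covering $[0,1)$ up to finitely many points is monotone on all of $[0,1)$. This is the same argument as at the end of the proof of Theorem~\ref{mon}, and it completes the proof.
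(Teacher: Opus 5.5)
Your proposal is correct and follows the paper's proof, which applies Theorem~\ref{mon} with $n=0$ on each interval from Lemma~\ref{lempartition}(a) and glues the pieces using continuity of $V_\theta$. Replacing $\gamma_{\phi^{(i)}}$ by the uniform $\gamma$ through the monotonicity of $t\mapsto t/(1-\theta t)$ fills in a step the paper leaves implicit; equivalently, you could note that $(1-\theta\gamma)/(1-\theta\gamma_{\phi})$ is positive and non-increasing in $\theta$, which lets you use Theorem~\ref{mon} as stated.
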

\begin{proof}
    This theorem follows by letting $n=0$ in Theorem~\ref{mon} and by Lemma~\ref{lempartition}(a).
\end{proof}

The following corollary is similar to Corollary~\ref{vb}, but it deals with value functions.
\begin{cor}\label{valuein}
    Let $n\in\N,$ $0\le\alpha\le\theta\le\beta<1,$ and $\phi\in\F.$ Then
    \begin{enumerate}[(a)]
        \item $\norm*{V_\theta}\le\min\left\{\frac{1-\alpha}{1-\theta}\norm*{V_\alpha}, \frac{1-\theta}{1-\beta}\norm*{V_\beta}\right\};$
        \item $\normo*{V_\theta}\le\min\left\{\frac{1-\alpha\gamma}{1-\theta\gamma}\normo*{V_\alpha}, \frac{1-\theta\gamma}{1-\beta\gamma}\normo*{V_\beta}\right\}.$
    \end{enumerate}
\end{cor}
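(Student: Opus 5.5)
Corollary~\ref{valuein} follows from the monotonicity statements in Theorem~\ref{Vine}, just as Corollary~\ref{vb} follows from Theorem~\ref{mon}. The parameters $n$ and $\phi$ in the statement play no role; they are only carried over from Corollary~\ref{vb}.

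For (a), fix $0\le\alpha\le\theta\le\beta<1$. By Theorem~\ref{Vine}(a), $(1-\theta)\norm*{V_\theta}$ is non-increasing, so
\[(1-\theta)\norm*{V_\theta}\le(1-\alpha)\norm*{V_\alpha}.\]
Dividing by $1-\theta>0$ gives the first bound. By Theorem~\ref{Vine}(b), $(1-\theta)^{-1}\norm*{V_\theta}$ is non-decreasing, so
\[(1-\theta)^{-1}\norm*{V_\theta}\le(1-\beta)^{-1}\norm*{V_\beta}.\]
Multiplying by $1-\theta$ gives the second bound. Taking the minimum of the two bounds yields (a).

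Part (b) is identical, with $\norm{\cdot}$ replaced by $\normo{\cdot}$ and $1-\theta$ replaced by $1-\theta\gamma$. Here $1-\theta\gamma\ge1-\theta>0$, so the divisions are legitimate. The monotonicity comes from the seminorm halves of Theorem~\ref{Vine}(a) and (b).

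The only substantive point is already contained in Theorem~\ref{Vine}. On each interval $\mathcal{I}_i$ from Lemma~\ref{lempartition}(a), $V_\theta=v_\theta^{\phi^{(i)}}$ for a single policy, so Theorem~\ref{mon} with $n=0$ applies there. For the seminorm, Theorem~\ref{mon} gives monotonicity of $(1-\theta\gamma_{\phi^{(i)}})\normo*{V_\theta}$, but the corollary needs it with the uniform constant $\gamma$. Since $\gamma_{\phi^{(i)}}\le\gamma$, Lemma~\ref{lemQ} and Theorem~\ref{diffb}(b) remain valid with $\gamma$ in place of $\gamma_\phi$, because $t\mapsto t/(1-\theta t)$ is increasing. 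The derivative computations in the proof of Theorem~\ref{mon} therefore go through with $\gamma$. Continuity of $V_\theta$ then lets the monotonicity pass across the finitely many points $a_i$. Assuming Theorem~\ref{Vine} as stated, nothing remains beyond the elementary rearrangements above.
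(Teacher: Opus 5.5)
Your proof is correct and takes the same approach as the paper, which obtains the corollary directly from the monotonicity statements of Theorem~\ref{Vine}. Your side remark correctly fills in a step the paper leaves implicit in proving Theorem~\ref{Vine}: the seminorm part needs the uniform constant $\gamma$ in place of $\gamma_{\phi^{(i)}}$, and this works because $t\mapsto t/(1-\theta t)$ is increasing and $V_\theta$ is continuous across the breakpoints.
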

\begin{proof}
        This corollary follows from Theorem~\ref{Vine}.
\end{proof}

The following corollary is similar to Corollary~\ref{ratiob}, but it deals with value functions.
\begin{cor}\label{V0} Let $\alpha,\beta\in[0,1).$
    \begin{enumerate}[(a)]
        \item If $\norm*{V_\alpha}\ne0,$ then $\norm*{V_\beta}\ne0,$ and $\frac{1-\max\{\alpha,\beta\}}{1-\min\{\alpha,\beta\}}\le\frac{\norm{V_\beta}}{\norm{V_\alpha}}\le\frac{1-\min\{\alpha,\beta\}}{1-\max\{\alpha,\beta\}};$ 
        \item If $\normo*{V_\alpha}\ne0,$ then $\normo*{V_\beta}\ne0,$ and  $\frac{1-\gamma\max\{\alpha,\beta\}}{1-\gamma\min\{\alpha,\beta\}}\le\frac{\normo{V_\beta}}{\normo{V_\alpha}}\le\frac{1-\gamma\min\{\alpha,\beta\}}{1-\gamma\max\{\alpha,\beta\}};$ 
    \end{enumerate}
\end{cor}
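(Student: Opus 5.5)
The plan is to derive Corollary~\ref{V0} from the monotonicity in Theorem~\ref{Vine}, exactly as Corollary~\ref{ratiob} follows from Theorem~\ref{mon} with $n=0$. Both parts involve symmetric expressions in $\alpha,\beta$, so I would write $a:=\min\{\alpha,\beta\}$ and $b:=\max\{\alpha,\beta\}$. I would then prove the two-sided bounds relating $\norm{V_a}$ and $\norm{V_b}$, and at the end read them in the order required by which of $\alpha,\beta$ is the smaller one. If $a=b$, there is nothing to prove, so I would assume $a<b$.

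For (a), Theorem~\ref{Vine}(a) states that $(1-\theta)\norm{V_\theta}$ is non-increasing on $[0,1)$. This gives $(1-b)\norm{V_b}\le(1-a)\norm{V_a}$, that is, $\norm{V_b}\le\frac{1-a}{1-b}\norm{V_a}$. Theorem~\ref{Vine}(b) states that $(1-\theta)^{-1}\norm{V_\theta}$ is non-decreasing, so $\norm{V_a}/(1-a)\le\norm{V_b}/(1-b)$, that is, $\norm{V_b}\ge\frac{1-b}{1-a}\norm{V_a}$. Together these give
\[\frac{1-b}{1-a}\norm{V_a}\le\norm{V_b}\le\frac{1-a}{1-b}\norm{V_a}.\]
Both coefficients lie in $(0,\infty)$ because $b<1$. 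Hence $\norm{V_a}\neq0$ if and only if $\norm{V_b}\neq0$, which gives the nonvanishing claim whichever of $\alpha,\beta$ is the smaller one. Dividing by $\norm{V_a}$ yields the bound on $\norm{V_b}/\norm{V_a}$. For the ratio in the other order, I would divide by $\norm{V_b}$ and take reciprocals of the two coefficients; the interval $\left[\frac{1-b}{1-a},\frac{1-a}{1-b}\right]$ is mapped onto itself by $x\mapsto1/x$. The ratio $\norm{V_\beta}/\norm{V_\alpha}$ therefore satisfies the stated bounds in both cases.

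Part (b) is identical, using the seminorm statements of Theorem~\ref{Vine}, with $1-\theta$ replaced by $1-\theta\gamma$. The coefficients $\frac{1-\gamma b}{1-\gamma a}$ and its reciprocal are positive and finite because $\gamma\le1$ and $b<1$ give $1-\gamma b>0$. The same reciprocal-symmetry argument handles the order of $\alpha$ and $\beta$.

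There is no real obstacle. The only point needing care is that the bounds must be positive and finite, so that nonvanishing transfers in both directions, and this is the role of the observation that $1-b>0$ and $1-\gamma b>0$. The substantive work, namely the monotonicity of the weighted norms of $V_\theta$ across irregular points, was already done in Theorem~\ref{Vine} via the continuity of $V_\theta$ from Lemma~\ref{lempartition}(a), and I would take it as given.
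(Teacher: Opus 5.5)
Your proof is correct and follows the same route as the paper, which simply states that the corollary follows from the monotonicity in Theorem~\ref{Vine}. You supply the omitted details: the two-sided bound between $\norm{V_a}$ and $\norm{V_b}$ with positive finite coefficients, which transfers nonvanishing in both directions, and the invariance of the interval under $x\mapsto 1/x$, which handles either ordering of $\alpha,\beta$; part (b) is identical with $1-\theta\gamma$ in place of $1-\theta$.
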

\begin{proof}
    This corollary follows from Theorem~\ref{Vine}.
\end{proof}

The following corollary is similar to Corollary~\ref{diff0}, but it deals with value functions. It states that, if the norm or the seminorm of the value function is $0$ at some discount factor, then its norm or its seminorm is $0$ for all discount factors.
\begin{cor}\label{v0}
    \begin{enumerate}[(a)]
        \item $\norm{V_\alpha}=0$ for some $\alpha\in[0,1)$ iff $\norm{V_\beta}=0$ for all $\beta\in[0,1);$
        \item $\normo{V_\alpha}=0$ for some $\alpha\in[0,1)$ iff $\normo{V_\beta}=0$ for all $\beta\in[0,1).$
    \end{enumerate}
\end{cor}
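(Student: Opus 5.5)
The plan is to read Corollary~\ref{v0} off the two-sided bounds in Corollary~\ref{V0}, or directly off the monotonicity statements in Theorem~\ref{Vine}. In each part the direction ``for all $\beta$ $\Rightarrow$ for some $\alpha$'' is trivial: take $\alpha=\beta$. So the only work is the forward direction.

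For (a), I argue by contraposition. Suppose $\norm{V_\alpha}=0$ for some $\alpha\in[0,1)$, but $\norm{V_\beta}\ne0$ for some $\beta\in[0,1)$. Apply Corollary~\ref{V0}(a) with the roles of $\alpha$ and $\beta$ exchanged. Since $\norm{V_\beta}\neq0$, that corollary gives $\norm{V_\alpha}\ne0$, which is a contradiction.

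Alternatively, one can argue straight from Theorem~\ref{Vine}, splitting on the position of $\beta$:
\begin{itemize}
\item If $\beta\ge\alpha$, then the non-increasing quantity $(1-\theta)\norm{V_\theta}$ gives $0\le(1-\beta)\norm{V_\beta}\le(1-\alpha)\norm{V_\alpha}=0$.
\item If $\beta<\alpha$, then the non-decreasing quantity $(1-\theta)^{-1}\norm{V_\theta}$ gives $0\le(1-\beta)^{-1}\norm{V_\beta}\le(1-\alpha)^{-1}\norm{V_\alpha}=0$.
\end{itemize}
Since $1-\beta>0$, both cases yield $\norm{V_\beta}=0$.

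Part (b) is proved identically, using Corollary~\ref{V0}(b), or the two seminorm statements of Theorem~\ref{Vine}, with weights $(1-\theta\gamma)^{\pm1}$. These weights are strictly positive because $\gamma\le1$ and $\theta<1$.

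There is essentially no obstacle here. The only thing to check is that the weights $(1-\theta)$ and $(1-\theta\gamma)$ never vanish on $[0,1)$, so that multiplying or dividing by them preserves zeroness. All the substantive content, namely the sandwich of $\norm{V_\theta}$ between $\norm{V_\alpha}$ times the positive weight ratios $\frac{1-\theta}{1-\alpha}$ and $\frac{1-\alpha}{1-\theta}$, has already been established in Theorem~\ref{Vine}.
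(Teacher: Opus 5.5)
Your proposal is correct and takes essentially the same approach as the paper, which simply states that the corollary follows from Theorem~\ref{Vine} or Corollary~\ref{V0}. Your case split on $\beta\ge\alpha$ versus $\beta<\alpha$, and your check that the weights $1-\theta$ and $1-\theta\gamma$ stay positive on $[0,1)$, spell out details the paper leaves implicit.
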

\begin{proof} This  corollary follows from Theorem~\ref{Vine} or Corollary~\ref{V0}.
\end{proof}
Let $R_x=\max_{a\in A_x}r(x,a)$ and $R=\max_{x\in\X}R_x.$ We recall that $D(0)=\left\{\phi\in\F:r(x,\phi(x))\right\}.$
    \begin{cor}\label{Value0} 
\begin{enumerate}[(a)] 
    \item $\norm{V_\alpha}=0$ for some $\alpha\in[0,1)$ iff $R_x=0$ for all $x\in\X;$
    \item $\normo{V_\alpha}=0$ for some $\alpha\in[0,1)$ iff there is no irregular point, $D(\beta)=D(0)$ and $V_\beta=\frac{R}{1-\beta}\mathbf{1}$ for all $\beta\in[0,1).$ In particular, if $\alpha$ is an irregular point, then $\normo{V_\alpha}>0,$ i.e., all entries of $V_\alpha$ cannot be identical.
\end{enumerate}
\end{cor}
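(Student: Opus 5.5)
Both parts reduce, via Corollary~\ref{v0}, to an analysis at $\alpha=0$, where $V_0(x)=\max_{a\in A_x}r(x,a)=R_x$. I then transfer the conclusion to all $\beta$ through the structure of optimal policies.

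For (a), Corollary~\ref{v0}(a) shows that $\norm{V_\alpha}=0$ for some $\alpha$ iff $\norm{V_\beta}=0$ for all $\beta\in[0,1)$. This in turn is equivalent to $\norm{V_0}=0$: one direction is immediate by taking $\beta=0$, and the other is Corollary~\ref{v0}(a) applied with $\alpha=0$. Since $V_0=(R_x)_{x\in\X}$, the condition $\norm{V_0}=0$ says exactly that $R_x=0$ for all $x$.

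For (b), I start with sufficiency. If $V_\beta=\frac{R}{1-\beta}\mathbf{1}$ for all $\beta$, then $\normo{V_\beta}=0$ for every $\beta$, so this direction is trivial.

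For necessity, assume $\normo{V_\alpha}=0$ for some $\alpha$. By Corollary~\ref{v0}(b), $V_\beta=c_\beta\mathbf{1}$ for all $\beta$, and at $\beta=0$ this gives $R_x=R$ for every $x$, so $c_0=R$. Now take any $\phi\in D(0)$, i.e.\ $r(\phi)=R\mathbf{1}$. Then $v_\beta^\phi=\sum_t\beta^tP^t(\phi)R\mathbf{1}=\frac{R}{1-\beta}\mathbf{1}$. Any policy $\psi$ satisfies $r(\psi)\le R\mathbf{1}$, hence $v^\psi_\beta\le\frac{R}{1-\beta}\mathbf{1}$. It follows that $V_\beta=\frac{R}{1-\beta}\mathbf{1}$ and $D(0)\subseteq D(\beta)$ for every $\beta$. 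Note that this step did not use the hypothesis beyond $R_x\equiv R$.

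The main obstacle is the reverse inclusion $D(\beta)\subseteq D(0)$. Suppose $\psi\in D(\beta)$ with $\beta>0$, so $v_\beta^\psi=\frac{R}{1-\beta}\mathbf{1}$. Set $\mathbf{w}:=R\mathbf{1}-r(\psi)\ge\mathbf{0}$. Then $\frac{R}{1-\beta}\mathbf{1}-v^\psi_\beta=\sum_t\beta^tP^t(\psi)\mathbf{w}=\mathbf{0}$. The $t=0$ term is $\mathbf{w}$ itself, and every term is nonnegative, so $\mathbf{w}=\mathbf{0}$. Hence $\psi\in D(0)$, and $D(\beta)=D(0)$ for all $\beta$. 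The case $\beta=0$ holds by definition.

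Once $D$ is constant on $[0,1)$, we have $D(\beta-)=D(\beta)=D(\beta+)$ for every $\beta$, together with $D(0)=D(0+)$, so there are no irregular points.

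The final claim of (b) is the contrapositive. If $\normo{V_\alpha}=0$, the argument above shows there are no irregular points, so $\alpha$ cannot be irregular.
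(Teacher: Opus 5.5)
Your proof is correct. Part (a) and the first half of (b) match the paper: sufficiency, $R_x\equiv R$ via Corollary~\ref{v0}(b), and $D(0)\subseteq D(\beta)$. The real difference is the reverse inclusion $D(\beta)\subseteq D(0)$.

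The paper argues by contradiction at a putative irregular point $\tilde\alpha$. It takes $\psi\notin D(0)$ with $v^\psi_{\tilde\alpha}=V_{\tilde\alpha}$, so that $\normo{v^\psi_{\tilde\alpha}}=0$. It then invokes Corollary~\ref{diff0}(b), and through it the seminorm monotonicity of Theorem~\ref{mon}, to get $r(\psi)=c\mathbf{1}$ with $c<R$. This contradicts $v^\psi_{\tilde\alpha}=\frac{R}{1-\tilde\alpha}\mathbf{1}$.

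You instead write $\frac{R}{1-\beta}\mathbf{1}-v^\psi_\beta=\sum_t\beta^tP^t(\psi)\mathbf{w}$ with $\mathbf{w}\ge\mathbf{0}$, and read off $\mathbf{w}=\mathbf{0}$ from the $t=0$ term. What your route buys:
\begin{itemize}
\item It is more elementary, using only nonnegativity of $P^t(\psi)$ and no differential machinery for $\psi$.
\item It proves $D(\beta)=D(0)$ directly for every $\beta$, and then gets the absence of irregular points from constancy of $D$. The paper's phrasing ``irregular, i.e., there exists $\psi\notin D(0)$ with $v^\psi_{\tilde\alpha}=V_{\tilde\alpha}$'' tacitly assumes the change in $D$ occurs at $\tilde\alpha$ itself.
\item Your componentwise bound $v^\psi_\beta\le\frac{R}{1-\beta}\mathbf{1}$ is the correct form of the estimate the paper cites as $\norm{V_\beta}\le\frac{R}{1-\beta}$. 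That inequality fails as written when $R<0$.
\end{itemize}
The paper's route, in exchange, presents the corollary as an application of its seminorm theory (Corollary~\ref{diff0}).
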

\begin{proof}
    By Corollary~\ref{v0}(a) $\norm{V_\alpha}=0$ iff $\norm{V_0}=0,$ and $V_0(x)=R_x$ for all $x\in\X.$ This proves (a). Sufficiency of (b) is obvious. Let us prove the necessity of (b). If $\normo{V_\alpha}=0,$ then $\normo{V_0}=0$ by Corollary~\ref{v0}(b), which implies $V_0=r(\phi)=R\mathbf{1}$ for all $\phi\in D(0).$ Then $v_\beta^\phi=\frac{R}{1-\beta}\mathbf{1}$ for all $\phi\in D(0)$ and for all $\beta\in[0,1).$ However, it is well-known that $\norm{V_\beta}\le\frac{R}{1-\beta}$ for all $\beta\in[0,1)$ \cite[p.~163]{Put}. Thus $V_\beta=\frac{R}{1-\beta}\mathbf{1}$ and $D(0)\subset D(\beta)$ for all $\beta\in[0,1),$ which also implies $0$ is a regular point. Moreover, suppose $\tilde{\alpha}\in(0,1)$ is an irregular point, i.e., there exists $\psi\notin D(0)$ such that $v_{\tilde{\alpha}}^\psi=V_{\tilde{\alpha}}=\frac{R}{1-\tilde{\alpha}}\mathbf{1}.$ Then $\normo*{v_{\tilde{\alpha}}^\psi}=\normo*{V_{\tilde{\alpha}}}=0.$ By Corollary~\ref{diff0}(b) $\normo*{v_0^\psi}=0,$ which together with $\psi\notin D(0)$ implies $r(\psi)=c\mathbf{1}$ for some $c<R.$ Then $v_{\tilde{\alpha}}^\psi=\frac{c}{1-\tilde{\alpha}}\mathbf{1},$ which contradicts to $v_{\tilde{\alpha}}^\psi=\frac{R}{1-\tilde{\alpha}}\mathbf{1}.$ Therefore, such an irregular point $\tilde{\alpha}$ doesn't exist, and $D(0)=D(\beta)$ for all $\beta\in[0,1).$
\end{proof}


\section{Bounds on Increments of Functions of Discount Factors}\label{seclip}
In this section we are going to use the following integral properties: if $\mathbf{u}_\theta\in\R^m$ is a vector function of $\theta$ such that $\mathbf{u}_\theta(x)$ is an integrable function over $\theta\in[a,b]\subset\R$ for each $x\in\X,$ then both $\norm{\mathbf{u}_\theta}$ and $\normo{\mathbf{u}_\theta}$ are integrable over $[a,b],$ and (a) $\norm*{\int_a^b\mathbf{u}_\theta d\theta}\le\int_a^b\norm{\mathbf{u}_\theta}d\theta;$ (b) $\normo*{\int_a^b\mathbf{u}_\theta d\theta}\le\int_a^b\normo{\mathbf{u}_\theta}d\theta.$ Integrability of $\norm{\mathbf{u}_\theta}$ and $\normo{\mathbf{u}_\theta}$ is by the definitions of $\norm{\cdot},$ $\normo{\cdot}$ and because maximums, minimums, and sums of two integrable functions are integrable functions; see, e.g., \cite{Real}. Statement (a) is well-known and follows from Minkowski's integral inequality. Statement (b) is true because we can let $z_1,z_2\in\X$ such that $\int_a^b\mathbf{u}_\theta(z_1)d\theta=\max_{x\in\X}\int_a^b\mathbf{u}_\theta(x)d\theta$ and $\int_a^b\mathbf{u}_\theta(z_2)d\theta=\min_{x\in\X}\int_a^b\mathbf{u}_\theta(x)d\theta,$ which implies
\[\normo*{\int_a^b\mathbf{u}_\theta d\theta}=
    \max_{x\in\X}\int_a^b\mathbf{u}_\theta(x)d\theta-\min_{x\in\X}\int_a^b\mathbf{u}_\theta(x)d\theta=\int_a^b\left[\mathbf{u}_\theta(z_1)-\mathbf{u}_\theta(z_2)\right]d\theta\le\int_a^b\normo{\mathbf{u}_\theta}d\theta.\]


\cite[Lemma 4.2]{TPa} shows that value functions are Lipschitz continuous. The following theorem extends the Lipschitz continuity to the derivatives of objective functions.

\begin{thm}\label{olip} Let $n\in\N,$ $\phi\in\F,$ and $\alpha,\beta\in[0,1).$ Then
\begin{enumerate}[(a)]
    \item $\norm*{u_{n,\beta}^\phi-u_{n,\alpha}^\phi}\le\left[\left(\frac{1-\min\{\alpha,\beta\}}{1-\max\{\alpha,\beta\}}\right)^{n+1}-1\right]\min\left\{\norm*{u_{n,\alpha}^\phi},\norm*{u_{n,\beta}^\phi}\right\};$
    \item $\normo*{u_{n,\beta}^\phi-u_{n,\alpha}^\phi}\le\left[\left(\frac{1-\gamma_\phi\min\{\alpha,\beta\}}{1-\gamma_\phi\max\{\alpha,\beta\}}\right)^{n+1}-1\right]\min\left\{\normo*{u_{n,\alpha}^\phi},\normo*{u_{n,\beta}^\phi}\right\}.$
\end{enumerate}
\end{thm}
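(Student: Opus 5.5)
We assume without loss of generality that $\alpha<\beta$; the case $\alpha=\beta$ is trivial, and the statement is symmetric in $\alpha,\beta$. The idea is to write the increment as an integral of the derivative and bound the integrand pointwise, combining Theorem~\ref{diffb} with Corollary~\ref{vb}. The $(n+1)$-st power in the bound should then come out exactly from integrating the resulting power of $(1-\theta)$ (respectively $(1-\theta\gamma_\phi)$).

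\emph{Integral representation.} Since $u_{n,\theta}^\phi=\frac{1}{n!}\frac{d^n}{d\theta^n}v_\theta^\phi$ and $v_\theta^\phi$ is rational, hence smooth, on $[0,1)$, we have $\frac{d}{d\theta}u_{n,\theta}^\phi=(n+1)u_{n+1,\theta}^\phi$. Therefore
\[u_{n,\beta}^\phi-u_{n,\alpha}^\phi=(n+1)\int_\alpha^\beta u_{n+1,\theta}^\phi\,d\theta .\]
Applying the integral properties stated at the beginning of Section~\ref{seclip}, we obtain
\[\norm*{u_{n,\beta}^\phi-u_{n,\alpha}^\phi}\le(n+1)\int_\alpha^\beta\norm*{u_{n+1,\theta}^\phi}d\theta,\]
and the analogous inequality holds for $\normo{\cdot}$.

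\emph{Bound for part (a).} By Theorem~\ref{diffb}(a) with $k=n$, $\norm*{u_{n+1,\theta}^\phi}\le\frac{1}{1-\theta}\norm*{u_{n,\theta}^\phi}$. We then bound $\norm*{u_{n,\theta}^\phi}$ in two ways using Corollary~\ref{vb}(a), since $\alpha\le\theta\le\beta$:
\begin{enumerate}[(i)]
\item with $\norm*{u_{n,\theta}^\phi}\le\left(\frac{1-\alpha}{1-\theta}\right)^{n+1}\norm*{u_{n,\alpha}^\phi}$, the integral becomes
\[(n+1)(1-\alpha)^{n+1}\norm*{u_{n,\alpha}^\phi}\int_\alpha^\beta(1-\theta)^{-(n+2)}d\theta=\left[\left(\tfrac{1-\alpha}{1-\beta}\right)^{n+1}-1\right]\norm*{u_{n,\alpha}^\phi};\]
\item with $\norm*{u_{n,\theta}^\phi}\le\left(\frac{1-\theta}{1-\beta}\right)^{n+1}\norm*{u_{n,\beta}^\phi}$, the integral becomes
\[(n+1)(1-\beta)^{-(n+1)}\norm*{u_{n,\beta}^\phi}\int_\alpha^\beta(1-\theta)^{n}d\theta=\frac{(1-\alpha)^{n+1}-(1-\beta)^{n+1}}{(1-\beta)^{n+1}}\norm*{u_{n,\beta}^\phi}.\]
\end{enumerate}
Both give the same coefficient. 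Taking the smaller of the two bounds yields the minimum in (a).

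\emph{Bound for part (b).} The argument is identical, using Theorem~\ref{diffb}(b) to get $\normo*{u_{n+1,\theta}^\phi}\le\frac{\gamma_\phi}{1-\theta\gamma_\phi}\normo*{u_{n,\theta}^\phi}$, and Corollary~\ref{vb}(b) in its two forms. The key antiderivative is $\frac{d}{d\theta}(1-\theta\gamma_\phi)^{-(n+1)}=(n+1)\gamma_\phi(1-\theta\gamma_\phi)^{-(n+2)}$. This gives
\[(n+1)\gamma_\phi(1-\alpha\gamma_\phi)^{n+1}\int_\alpha^\beta(1-\theta\gamma_\phi)^{-(n+2)}d\theta=\left(\tfrac{1-\alpha\gamma_\phi}{1-\beta\gamma_\phi}\right)^{n+1}-1.\]
The $\beta$-sided version uses $\frac{d}{d\theta}(1-\theta\gamma_\phi)^{n+1}=-(n+1)\gamma_\phi(1-\theta\gamma_\phi)^{n}$ and gives the same coefficient. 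When $\gamma_\phi=0$ both sides are $0$, consistent with Theorem~\ref{diffb}(b).

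\emph{Expected difficulty.} There is no serious obstacle. The main care needed is to justify differentiating under the integral, which is fine since $u^\phi_{n,\theta}$ is rational, and to check that the two integrals evaluate to exactly the same bracketed constant. That check is a routine computation.
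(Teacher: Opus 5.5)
Your proposal is correct and follows the paper's proof essentially step for step. It writes the increment via the fundamental theorem of calculus, bounds the integrand by $\frac{n+1}{1-\theta}\norm*{u_{n,\theta}^\phi}$ (resp.\ $\frac{(n+1)\gamma_\phi}{1-\theta\gamma_\phi}$ times the seminorm) using Theorem~\ref{diffb}, and integrates the two one-sided bounds of Corollary~\ref{vb} to obtain the same bracketed constant. Your integral evaluations check out; the only quibble is that the step to justify is the fundamental theorem of calculus for the smooth map $\theta\mapsto u_{n,\theta}^\phi$, not differentiation under the integral sign.
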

\begin{proof}
    Without loss of generality, let us assume $\alpha\le\beta.$ 
    To prove (a), 
    let us observe that
        \begin{equation}\label{eqvint1}
\begin{split}
\norm*{u_{n,\beta}^\phi-u_{n,\alpha}^\phi} 
&= \norm*{\int_\alpha^\beta\frac{d}{d\theta}u_{n,\theta}^\phi d\theta} \le \int_\alpha^\beta\norm*{\frac{d}{d\theta}u_{n,\theta}^\phi}d\theta \\
&= \int_\alpha^\beta(n+1)\norm*{u_{n+1,\theta}^\phi}d\theta \le \int_\alpha^\beta\frac{n+1}{1-\theta}\norm*{u_{n,\theta}^\phi}d\theta,
\end{split}
\end{equation}
where the last inequality follows from $\norm*{u_{n+1,\theta}^\phi}\le (1-\theta)^{-1}\norm*{u_{n,\theta}^\phi},$ which holds by  Theorem~\ref{diffb}(a). Corollary~\ref{vb}(a) means that, for $\theta\in[\alpha,\beta],$
  \begin{align}
    \norm*{u_{n,\theta}^\phi}&\le\left(\frac{1-\alpha}{1-\theta}\right)^{n+1}\norm*{u_{n,\alpha}^\phi},\label{eqvintF1}\\
    \norm*{u_{n,\theta}^\phi}&\le\left(\frac{1-\theta}{1-\beta}\right)^{n+1}\norm*{u_{n,\beta}^\phi}.\label{eqvintF2}
  \end{align}
Formulae \eqref{eqvint1} and \eqref{eqvintF1} imply
   \begin{align}
        \norm*{u_{n,\beta}^\phi-u_{n,\alpha}^\phi}
        \le\int_\alpha^\beta\frac{(n+1)(1-\alpha)^{n+1}}{(1-\theta)^{n+2}}\norm*{u_{n,\alpha}^\phi}d\theta=\left[\left(\frac{1-\alpha}{1-\beta}\right)^{n+1}-1\right]\norm*{u_{n,\alpha}^\phi}.\label{eqvint2}
    \end{align}  
Formulae \eqref{eqvint1} and \eqref{eqvintF2} imply 
    \begin{equation}
        \norm*{u_{n,\beta}^\phi-u_{n,\alpha}^\phi}\le\int_\alpha^\beta\frac{(n+1)(1-\theta)^n}{(1-\beta)^{n+1}}\norm*{u_{n,\beta}^\phi}d\theta=\left[\left(\frac{1-\alpha}{1-\beta}\right)^{n+1}-1\right]\norm*{u_{n,\beta}^\phi}.\label{eqvint3}
    \end{equation}
    Formulae \eqref{eqvint2} and \eqref{eqvint3} together imply (a). 
    
    The proof of (b) is similar. Corollary~\ref{vb}(b) means that, for $\theta\in[\alpha,\beta],$ 
  \begin{align}
    \normo*{u_{n,\theta}^\phi}&\le\left(\frac{1-\alpha\gamma_\phi}{1-\theta\gamma_\phi}\right)^{n+1}\normo*{u_{n,\alpha}^\phi},\label{eqvintF3}\\
    \normo*{u_{n,\theta}^\phi}&\le\left(\frac{1-\theta\gamma_\phi}{1-\beta\gamma_\phi}\right)^{n+1}\normo*{u_{n,\beta}^\phi}. \label{eqvintF4}
  \end{align}
Next we observe that
     \begin{align}
        \normo*{u_{n,\beta}^\phi-u_{n,\alpha}^\phi}&=\normo*{\int_\alpha^\beta\frac{d}{d\theta}u_{n,\theta}^\phi d\theta}\nonumber\\
        &\le\int_\alpha^\beta\normo*{\frac{d}{d\theta}u_{n,\theta}^\phi}d\theta=\int_\alpha^\beta(n+1)\normo*{u_{n+1,\theta}^\phi}d\theta\le\int_\alpha^\beta\frac{(n+1)\gamma_\phi}{1-\theta\gamma_\phi}\normo*{u_{n,\theta}^\phi}d\theta,\label{eqvint4}
    \end{align}
where the last inequality follows from $\normo*{u_{n+1,\theta}^\phi}\le \left(\frac{\gamma_\phi}{1-\theta\gamma_\phi}\right)\normo*{u_{n,\theta}^\phi},$ which holds by Theorem~\ref{diffb}(b). Formulae \eqref{eqvint4} and \eqref{eqvintF3} imply
  
    \begin{align}
        \normo*{u_{n,\beta}^\phi-u_{n,\alpha}^\phi}
        \le\int_\alpha^\beta\frac{(n+1)(1-\alpha\gamma_\phi)^{n+1}\gamma_\phi}{(1-\theta\gamma_\phi)^{n+2}}\normo*{u_{n,\alpha}^\phi}d\theta=\left[\left(\frac{1-\alpha\gamma_\phi}{1-\beta\gamma_\phi}\right)^{n+1}-1\right]\normo*{u_{n,\alpha}^\phi}.\label{eqvint5}
    \end{align}
     Formulae \eqref{eqvint4} and \eqref{eqvintF4} imply
    \begin{equation}
        \normo*{u_{n,\beta}^\phi-u_{n,\alpha}^\phi}\le\int_\alpha^\beta\frac{(n+1)\gamma_\phi(1-\theta\gamma_\phi)^n}{(1-\beta\gamma_\phi)^{n+1}}\normo*{u_{n,\beta}^\phi}d\theta=\left[\left(\frac{1-\alpha\gamma_\phi}{1-\beta\gamma_\phi}\right)^{n+1}-1\right]\normo*{u_{n,\beta}^\phi}.\label{eqvint6}
    \end{equation}
    Formulae \eqref{eqvint5} and \eqref{eqvint6} together imply (b). 
\end{proof}

\begin{rem} In Theorem~\ref{olip}, the multiplier in inequality (a) does not exceed the multiplier in inequality (b), which is modified by $\gamma_\phi.$ We observe that inequality (a) does not hold if the multiplier in it is replaced with the multiplier from (b). Indeed, let us consider a policy $\phi$ whose one-step rewards are all identical, i.e., $r(x,\phi(x))=r(y,\phi(y))$ for all $x,y\in\X.$ Then the inequality in Theorem~\ref{olip}(a) becomes sharp for all $n\in\N$ meaning that in general the inequality in Theorem~\ref{olip}(b) does not hold for norms if $\gamma_\phi<1.$ 
\end{rem}
Statement (a) of the following theorem strengthens the Lipschitz continuity of value functions in \cite[Lemma 4.2]{TPa} and \cite[Theorem 2]{NIPS}.
\begin{thm}\label{Vlip} Let $\alpha,\beta\in[0,1).$ Then
\begin{enumerate}[(a)]
    \item $\norm{V_\beta-V_\alpha}\le\frac{\abs{\beta-\alpha}}{1-\max\{\alpha,\beta\}}\min\{\norm{V_\alpha},\norm{V_\beta}\};$
    \item $\normo{V_\beta-V_\alpha}\le\frac{\gamma\abs{\beta-\alpha}}{1-\gamma\max\{\alpha,\beta\}}\min\{\normo{V_\alpha},\normo{V_\beta}\}.$
\end{enumerate}
\end{thm}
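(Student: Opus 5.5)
Without loss of generality assume $\alpha\le\beta$. The plan is to mimic the proof of Theorem~\ref{olip} with $n=0$, but applied to the value function rather than to a single objective function. The value function is not differentiable at irregular points. We therefore use Lemma~\ref{lempartition}(a): $[\alpha,\beta]$ splits into finitely many subintervals on whose interiors $V_\theta=v_\theta^{\phi^{(i)}}$ for a fixed deterministic policy $\phi^{(i)}$. Since $V_\theta$ is continuous and piecewise rational, each coordinate $V_\theta(x)$ is absolutely continuous on $[\alpha,\beta]$, and
\[
V_\beta-V_\alpha=\int_\alpha^\beta \frac{d}{d\theta}V_\theta\, d\theta ,
\]
where on the $i$-th piece the integrand equals $\frac{d}{d\theta}v_\theta^{\phi^{(i)}}=u_{1,\theta}^{\phi^{(i)}}$.

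Next I apply the integral inequalities from the beginning of this section together with Theorem~\ref{diffb} (with $n=1,k=0$). On each piece,
\[
\norm*{u_{1,\theta}^{\phi^{(i)}}}\le \frac{1}{1-\theta}\norm*{v_\theta^{\phi^{(i)}}}=\frac{1}{1-\theta}\norm{V_\theta},\qquad
\normo*{u_{1,\theta}^{\phi^{(i)}}}\le \frac{\gamma_{\phi^{(i)}}}{1-\theta\gamma_{\phi^{(i)}}}\normo{V_\theta}\le \frac{\gamma}{1-\theta\gamma}\normo{V_\theta}.
\]
The last inequality holds because $t\mapsto t/(1-\theta t)$ is increasing and $\gamma_{\phi^{(i)}}\le\gamma$. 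This gives
\[
\norm{V_\beta-V_\alpha}\le\int_\alpha^\beta\frac{\norm{V_\theta}}{1-\theta}\,d\theta,\qquad
\normo{V_\beta-V_\alpha}\le\int_\alpha^\beta\frac{\gamma\normo{V_\theta}}{1-\theta\gamma}\,d\theta .
\]

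Now bound $\norm{V_\theta}$ in terms of $\norm{V_\alpha}$ or $\norm{V_\beta}$ using Corollary~\ref{valuein}. The sharper claimed constant $\frac{\beta-\alpha}{1-\beta}$ (rather than $\frac{1-\alpha}{1-\beta}-1$, which coincides with it here) requires care.
\begin{itemize}
\item Using $\norm{V_\theta}\le\frac{1-\theta}{1-\beta}\norm{V_\beta}$, the integrand becomes the constant $\norm{V_\beta}/(1-\beta)$, which gives exactly $\frac{\beta-\alpha}{1-\beta}\norm{V_\beta}$.
\item Using $\norm{V_\theta}\le\frac{1-\alpha}{1-\theta}\norm{V_\alpha}$ gives $\int_\alpha^\beta\frac{1-\alpha}{(1-\theta)^2}\,d\theta=\frac{1-\alpha}{1-\beta}-1=\frac{\beta-\alpha}{1-\beta}$, so the bound is $\frac{\beta-\alpha}{1-\beta}\norm{V_\alpha}$.
\end{itemize}
Taking the minimum proves (a).

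For (b), the same two substitutions with $\gamma$ work. Using $\normo{V_\theta}\le\frac{1-\theta\gamma}{1-\beta\gamma}\normo{V_\beta}$ yields
\[
\frac{\gamma(\beta-\alpha)}{1-\beta\gamma}\normo{V_\beta}.
\]
Using $\normo{V_\theta}\le\frac{1-\alpha\gamma}{1-\theta\gamma}\normo{V_\alpha}$ yields
\[
\frac{1-\alpha\gamma}{1-\beta\gamma}-1=\frac{\gamma(\beta-\alpha)}{1-\beta\gamma},
\]
multiplied by $\normo{V_\alpha}$.

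The main obstacle is the justification of the fundamental theorem of calculus across irregular points. I would handle it by writing $V_\beta-V_\alpha$ as a telescoping sum over the breakpoints of Lemma~\ref{lempartition}(a) lying in $[\alpha,\beta]$. Continuity of $V$ lets the endpoints of the pieces be matched. On each closed piece $V_\theta$ coincides with the rational function $v_\theta^{\phi^{(i)}}$ on the interior, hence also at the endpoints by continuity, so the integral representation holds piecewise. The pieces are then summed, using the triangle inequality for $\norm{\cdot}$ and $\normo{\cdot}$.
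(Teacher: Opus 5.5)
Your proof is correct. It uses the same ingredients as the paper: the partition from Lemma~\ref{lempartition}(a), the per-policy derivative bound of Theorem~\ref{diffb}, the passage from $\gamma_{\phi}$ to $\gamma$ via monotonicity of $t\mapsto t/(1-ct)$, and the monotonicity of Theorem~\ref{Vine}. You assemble them differently, though.

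The paper never integrates across the irregular points. On each closed piece $[c_i,c_{i+1}]$ it applies Theorem~\ref{olip} with $n=0$ to the single policy $\phi_i$. This gives discrete increments bounded by $\frac{c_{i+1}-c_i}{1-c_{i+1}}\min\{\norm{V_{c_i}},\norm{V_{c_{i+1}}}\}$. It then invokes Theorem~\ref{Vine} only at the breakpoints, to replace $\norm{V_{c_i}}$ by $\norm{V_\alpha}$ or $\norm{V_{c_{i+1}}}$ by $\norm{V_\beta}$. It finishes with the telescoping sum $\sum_i\bigl(\frac{1}{1-c_{i+1}}-\frac{1}{1-c_i}\bigr)$.

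You instead treat $V_\theta$ as a single absolutely continuous function on $[\alpha,\beta]$. You bound its a.e.\ derivative pointwise by $\norm{V_\theta}/(1-\theta)$, respectively $\gamma\normo{V_\theta}/(1-\theta\gamma)$, and apply Corollary~\ref{valuein} at every $\theta$. The constant then comes out of one elementary integral. In effect you rerun the proof of Theorem~\ref{olip} for $n=0$, with $v_\theta^\phi$ replaced by $V_\theta$ and $\gamma_\phi$ by $\gamma$.

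Your version is more streamlined and shows directly why the constant coincides with the $n=0$ case of Theorem~\ref{olip}. Its only extra cost is the fundamental theorem of calculus for a continuous, piecewise-smooth function. You justify this correctly: $V_\theta=v_\theta^{\phi^{(i)}}$ extends to the closed pieces by continuity. The paper's version is more modular, since it reuses Theorem~\ref{olip} as a black box and needs only finite sums across the breakpoints.
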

\begin{proof}
    Without loss of generality, let us assume $\alpha\le\beta.$ By Lemma~\ref{lempartition}(a), there exist let $j\in\N^+$ points $\alpha=c_0<c_1<\ldots<c_j<c_{j+1}=\beta$ such that $c_1,c_2,\cdots,c_j$ are irregular points, and $V_\theta$ is analytic on $\theta\in(c_i,c_{i+1})$ for each $0\le i\le j.$ By Lemma~\ref{lempartition}(a), for each $i=0,1,\cdots,j,$ there is some $\phi_i\in\F$ such that $V_\theta=v_\theta^{\phi_i}$ for all $\theta\in[c_i,c_{i+1}].$

To prove (a), we observe that, in view of Theorem~\ref{olip}(a),
 \begin{equation}\label{VsubEF1}
        \norm*{V_\beta-V_\alpha}\le\sum_{i=0}^j\norm*{V_{c_{i+1}}-V_{c_i}}\le \sum_{i=0}^j\frac{c_{i+1}-c_i}{1-c_{i+1}}\min\left\{\norm*{V_{c_i}},\norm*{V_{c_{i+1}}}\right\},
\end{equation}
  On one hand,  \eqref{VsubEF1} implies
    \begin{equation}\label{VsubEF2}
    \begin{aligned}
        \norm*{V_\beta-V_\alpha}&\le\sum_{i=0}^j\frac{c_{i+1}-c_i}{(1-c_{i+1})(1-c_i)}(1-c_i)\norm*{V_{c_i}}\le\sum_{i=0}^j\frac{c_{i+1}-c_i}{(1-c_{i+1})(1-c_i)}(1-\alpha)\norm*{V_\alpha} \\
        &=(1-\alpha)\norm*{V_\alpha}\sum_{i=0}^j\left(\frac{1}{1-c_{i+1}}-\frac{1}{1-c_i}\right)=\frac{\beta-\alpha }{1-\beta
        }\norm*{V_\alpha},
    \end{aligned}
    \end{equation}
    where the second inequality is by  Theorem~\ref{Vine}(a), and the last equality follows from
    \[
    \sum_{i=0}^j\left(\frac{1}{1-c_{i+1}}-\frac{1}{1-c_i}\right)=\frac{1}{1-\beta}-\frac{1}{1-\alpha}=\frac{\beta-\alpha}{(1-\beta)(1-\alpha)}.
    \]
    On the other hand, by   \eqref{VsubEF1},
    \begin{equation}\label{VsubEF3}
        \norm*{V_{\beta}-V_{\alpha}}\le\sum_{i=0}^j\frac{c_{i+1}-c_i}{1-c_{i+1}}\norm*{V_{c_{i+1}}}\le \sum_{i=0}^j\frac{c_{i+1}-c_i}{1-\beta}\norm*{V_\beta}=\frac{\beta-\alpha}{1-\beta}\norm*{V_\beta},
    \end{equation}
where the second inequality is by Theorem~\ref{Vine}(b). Thus, \eqref{VsubEF2} and \eqref{VsubEF3} together imply (a).

Let us prove (b) similarly to the proof of (a). We observe that,  in view of Theorem~\ref{olip}(b) and Lemma~\ref{lempartition}(a), 
\begin{equation}\label{VsubEF4}
 \begin{aligned}
        \normo*{V_\beta-V_\alpha}\le\sum_{i=0}^j\normo*{V_{c_{i+1}}-V_{c_i}}&\le\sum_{i=0}^j\frac{\gamma_{\phi_i}(c_{i+1}-c_{i})}{1-c_{i+1}\gamma_{\phi_i}}\min\left\{\normo*{v_{c_i}^{\phi_i}},\normo*{v_{c_{i+1}}^{\phi_i}}\right\}\\ 
        &\le \sum_{i=0}^j\frac{\gamma (c_{i+1}-c_i)}{1-c_{i+1}\gamma}\min\left\{\normo*{V_{c_i}},\normo*{V_{c_{i+1}}}\right\}.
    \end{aligned}
   \end{equation} 
On one hand,  \eqref{VsubEF4} implies
    \begin{align}\label{VsubEF5}
        \normo*{V_\beta-V_\alpha}&\le\sum_{i=0}^j\frac{\gamma(c_{i+1}-c_i)}{(1-c_{i+1}\gamma)(1-c_i\gamma)}(1-c_i\gamma)\normo*{V_{c_i}}\le\sum_{i=0}^j\frac{\gamma(c_{i+1}-c_i)}{(1-c_{i+1}\gamma)(1-c_i\gamma)}(1-\alpha\gamma)\normo*{V_\alpha}\nonumber\\
        &=(1-\alpha\gamma)\normo*{V_\alpha}\sum_{i=0}^j\left(\frac{1}{1-c_{i+1}\gamma}-\frac{1}{1-c_i\gamma}\right)=\frac{\gamma(\beta-\alpha) }{1-\beta\gamma
        }\normo*{V_\alpha},
    \end{align}
     where the second inequality is by  Theorem~\ref{Vine}(a).   On the other hand, by \eqref{VsubEF4},
    \begin{equation}\label{VsubEF6}
        \normo*{V_{\beta}-V_{\alpha}}\le\sum_{i=0}^j\frac{\gamma(c_{i+1}-c_i)}{1-c_{i+1}\gamma}\normo*{V_{c_{i+1}}}\le \sum_{i=0}^j\frac{\gamma(c_{i+1}-c_i)}{1-\beta\gamma}\normo*{V_\beta}=\frac{\gamma(\beta-\alpha)}{1-\beta\gamma}\normo*{V_\beta},
    \end{equation}
where the second inequality is by Theorem~\ref{Vine}(b). Thus, \eqref{VsubEF5} and \eqref{VsubEF6} together imply (b). 
\end{proof}


\section{Conclusion}
In this paper, we proved various results regarding the differential properties of objective functions and value functions of finite MDPs in discount factors. The proofs rely heavily on the fact that, for finite MDPs, the objective functions and their derivatives can be written in matrix algebra. One prospective research direction is to establish sufficient conditions on when similar differential properties also hold for MDPs with infinite state and action sets.
The relevant known result is the lower semicontinuity of the value function $\alpha\mapsto V_\alpha(x),$ $x\in\X,$ for an MDP with a Borel state space $\X$ and with setwise continuous transition probabilities; \cite[Theorem 3.1(ii)]{FK}.

\section*{Acknowledgement}
This research was partially supported by the Office of Naval Research (ONR) under grant N000142412608.

\section*{Conflict of Interest}
The authors declare that they have no conflict of interest.

\appendix
\section{Proof of Lemma~\ref{lem1}}\label{ap1}
    If $k\le m-1,$ then both statements are obvious. The sufficiency for both statements is also obvious. 
    Let us assume $k\ge m$ and prove the necessity. 

    Note that $\mathbf{1}$ is an eigenvector of $P(\phi)$ corresponding to the eigenvalue $1.$ Thus, we can let $P(\phi)=QJQ^{-1}$ be its Jordan decomposition such that $Q\mathbf{e}_1=\mathbf{1},$ and $J$ is upper-triangular such that $J\mathbf{e}_1=\mathbf{e}_1,$ where $\mathbf{e}_1$ is the column vector whose first entry is $1$ and all other entries are $0.$ Then $P^n(\phi)=QJ^nQ^{-1}$ and $J^n\mathbf{e}_1=\mathbf{e}_1$ for all $n\in\mathbb{N}.$ In addition, since $J$ is a Jordan-block-diagonal matrix with at least one Jordan block being non-degenerate (for example, any block with diagonal entries being the eigenvalue $1$), all the degenerate Jordan blocks of $J$ must have sizes less than or equal $m-1.$ Hence, if $\mathbf{v}\in\R$ such that $J^k\mathbf{v}=\mathbf{0},$ then $J^{m-1}\mathbf{v}=\mathbf{0}.$ Let $\mathbf{u}=Q^{-1}r(\phi).$ 
    
    Let us prove  (a), where we only need to prove $P^{m-1}(\phi)r(\phi)=\mathbf{0}.$ Note that
    \[J^k\mathbf{u}=Q^{-1}P^k(\phi)Q\mathbf{u}=Q^{-1}P^k(\phi)r(\phi)=\mathbf{0}.\]
    Thus, $J^{m-1}\mathbf{u}=\mathbf{0}.$ 
    Therefore
    \[P^{m-1}(\phi)r(\phi)=QJ^{m-1}Q^{-1}Q\mathbf{u}=QJ^{m-1}\mathbf{u}=\mathbf{0}.\]
    
    Let us prove (b), where we only need to prove $P^{m-1}(\phi)r(\phi)=c\mathbf{1}.$ Note that
    \[J^k(c\mathbf{e}_1)=cJ^k\mathbf{e}_1=c\mathbf{e}_1=cQ^{-1}\mathbf{1}=Q^{-1}(c\mathbf{1})=Q^{-1}P^k(\phi)r(\phi)=Q^{-1}QJ^kQ^{-1}r(\phi)=J^k\mathbf{u},\] 
    which implies $\mathbf{u}=c\mathbf{e}_1+\tilde{\mathbf{u}}$ for some $\tilde{\mathbf{u}}$ such that $J^k\tilde{\mathbf{u}}=\mathbf{0}.$ Thus, $J^{m-1}\tilde{\mathbf{u}}=\mathbf{0}.$ 
    Therefore, 
    \begin{align*}
        P^{m-1}(\phi)r(\phi)=QJ^{m-1}Q^{-1}Q\mathbf{u}=QJ^{m-1}\mathbf{u}&=QJ^{m-1}(c\mathbf{e}_1+\tilde{\mathbf{u}})\\
        &=cQJ^{m-1}\mathbf{e}_1+QJ^{m-1}\tilde{\mathbf{u}}=cQ\mathbf{e_1}=c\mathbf{1}. \qedhere
    \end{align*}

\section{Proof of Lemma~\ref{lemdiff}}\label{ap2}
Let us prove (a). For simplicity, we only prove the case of $\frac{d}{d\theta^+}\norm{\mathbf{u}_\theta},$ as the case of $\frac{d}{d\theta^-}\norm{\mathbf{u}_\theta}$ is similar. We first claim that $\frac{d}{d\theta^+}\abs*{\mathbf{u}_\theta(x)}$ exists and $\abs*{\frac{d}{d\theta^+}\abs*{\mathbf{u}_\theta(x)}}=\abs*{\frac{d}{d\theta^+}\mathbf{u}_\theta(x)}$ for each $x\in\X.$ If $\mathbf{u}_\theta(x)=0,$ then
\[\lim_{h\to0^+}\frac{\abs*{\mathbf{u}_{\theta+h}(x)}-\abs*{\mathbf{u}_\theta(x)}}{h}=\abs*{\lim_{h\to0^+}\frac{\mathbf{u}_{\theta+h}(x)-\mathbf{u}_\theta(x)}{h}}=\abs*{\frac{d}{d\theta^+}\mathbf{u}_\theta(x)},\]  which implies that $\frac{d}{d\theta^+}\abs*{\mathbf{u}_\theta(x)}$ exists and $\frac{d}{d\theta^+}\abs*{\mathbf{u}_\theta(x)}=\abs*{\frac{d}{d\theta^+}\mathbf{u}_\theta(x)}.$ If $\mathbf{u}_\theta(x)\ne0,$ then the existence of $\frac{d}{d\theta}\mathbf{u}_\theta(x)$ implies continuity of $\mathbf{u}_\alpha$ at $\alpha=\theta.$ Thus, either $\abs*{\mathbf{u}_\alpha(x)}=\mathbf{u}_\alpha(x)$ or $\abs*{\mathbf{u}_\alpha(x)}=-\mathbf{u}_\alpha(x)$ for all $\alpha$ in a neighborhood of $\theta,$ which implies that $\frac{d}{d\theta^+}\abs*{\mathbf{u}_\theta(x)}$ exists, and either $\frac{d}{d\theta^+}\abs*{\mathbf{u}_\theta(x)}=\frac{d}{d\theta^+}\mathbf{u}_\theta(x)$ or $\frac{d}{d\theta^+}\abs*{\mathbf{u}_\theta(x)}=-\frac{d}{d\theta^+}\mathbf{u}_\theta(x).$ So, $\abs*{\frac{d}{d\theta^+}\abs*{\mathbf{u}_\theta(x)}}=\abs*{\frac{d}{d\theta^+}\mathbf{u}_\theta(x)}.$ Therefore, the claim is proved. Now let  
\[\hat{\X}=\{x\in\X:\abs{\mathbf{u}_\theta(x)}=\norm{\mathbf{u}_\theta}\}, \qquad M=\max_{x\in\hat{\X}}\frac{d}{d\theta}\abs{\mathbf{u}_\theta(x)},\qquad\hat{\X}_\uparrow=\left\{x\in\hat{\X}:\frac{d}{d\theta}\abs{\mathbf{u}_\theta(x)}=M\right\}.\]
By  continuity of $\abs{\mathbf{u}_\alpha}$ at $\alpha=\theta$ implied by the existence of $\frac{d}{d\theta}\mathbf{u}_\theta,$ the definitions of $\Hat{\X}$ and $\Hat{\X}_\uparrow$, and the definition of $M,$ there exists $\delta>0$ such that $\abs{\mathbf{u}_{\theta+h}(x)}>\abs{\mathbf{u}_{\theta+h}(y)}$ whenever $x\in\hat{\X}_\uparrow,$ $y\in\X\setminus\hat{\X}_\uparrow,$ and $h\in(0,\delta).$ 
Let $\epsilon>0.$ Since $\hat{\X}$ is finite, without loss of generality, let $\delta$ be small enough for
\[\abs*{\frac{\abs{\mathbf{u}_{\theta+h}(x)}-\abs{\mathbf{u}_\theta(x)}}{h}-\frac{d}{d\theta}\abs{\mathbf{u}_\theta(x)}}<\epsilon \qquad \forall x\in\hat{\X},\quad\forall h\in(0,\delta).\]
Then for each $h\in(0,\delta),$ there exists $x\in\hat{\X}_\uparrow,$ which may depend on $h,$ such that $\abs{\mathbf{u}_{\theta+h}(x)}=\norm{\mathbf{u}_{\theta+h}},$ and for the same $x\in\hat{\X}_\uparrow,$
\begin{equation*}
\abs*{\frac{\norm{\mathbf{u}_{\theta+h}}-\norm{\mathbf{u}_\theta}}{h}-M}=\abs*{\frac{\abs{\mathbf{u}_{\theta+h}(x)}-\abs{\mathbf{u}_\theta(x)}}{h}-\frac{d}{d\theta}\abs{\mathbf{u}_\theta(x)}}<\epsilon,
\end{equation*}
which implies $\frac{d}{d\theta^+}\norm{\mathbf{u}_\theta}$ exists and equals $M.$ Therefore, for all $x\in\hat{\X}_\uparrow,$
    \[\abs*{\frac{d}{d\theta^+}\norm*{\mathbf{u}_\theta}}=\abs{M}=\abs*{\frac{d}{d\theta^+}\abs{\mathbf{u}_\theta(x)}}=\abs*{\frac{d}{d\theta^+}\mathbf{u}_\theta(x)}\le\norm*{\frac{d}{d\theta^+}\mathbf{u}_\theta}=\norm*{\frac{d}{d\theta}\mathbf{u}_\theta}.\]

Let us prove (b). For simplicity, we only prove the case of $\frac{d}{d\theta^+}\norm{\mathbf{u}_\theta},$ as the case of $\frac{d}{d\theta^-}\norm{\mathbf{u}_\theta}$ is similar. Let $\overline{\X}=\{x\in\X:\mathbf{u}_\theta(x)=\max_{y\in\X}\mathbf{u}_\theta(y)\}$ and $\underline{\X}=\{x\in\X:\mathbf{u}_\theta(x)=\min_{y\in\X}\mathbf{u}_\theta(y)\}.$ Then $\normo{\mathbf{u}_\theta}=\mathbf{u}_\theta(x)-\mathbf{u}_\theta(y)$ for all $x\in\overline{\X}$ and all $y\in\underline{\X}.$ Moreover, let
\begin{align*}
    &\overline{M}=\max_{x\in\overline{\X}}\frac{d}{d\theta}\mathbf{u}_\theta(x),&\overline{\X}_\uparrow=\left\{x\in\overline{\X}:\frac{d}{d\theta}\mathbf{u}_\theta(x)=\overline{M}\right\},\\
    &\underline{M}=\min_{x\in\underline{\X}}\frac{d}{d\theta}\mathbf{u}_\theta(x),&\underline{\X}_\downarrow=\left\{x\in\underline{\X}:\frac{d}{d\theta}\mathbf{u}_\theta(x)=\underline{M}\right\}.
\end{align*}
Note that continuity of $\mathbf{u}_\alpha$ at $\alpha=\theta$ and the definitions of $\overline{M},\underline{M}$ imply the existence of $\delta>0$ such that $\mathbf{u}_{\theta+h}(x)>\mathbf{u}_{\theta+h}(y)$ whenever $x\in\overline{\X}_\uparrow,$ $y\in\X\setminus\overline{\X}_\uparrow,$ $h\in(0,\delta),$ and $\mathbf{u}_{\theta+h}(x)<\mathbf{u}_{\theta+h}(y)$ whenever $x\in\underline{\X}_\downarrow,$ $y\in\X\setminus\underline{\X}_\downarrow,$ $h\in(0,\delta).$ Let $\epsilon>0.$ Without loss of generality, let $\delta$ be small enough for
\[\abs*{\frac{\mathbf{u}_{\theta+h}(x)-\mathbf{u}_\theta(x)}{h}-\frac{d}{d\theta}\mathbf{u}_\theta(x)}<\frac{\epsilon}{2} \qquad \forall x\in\X,\quad \forall h\in(0,\delta).\]
Then for each $h\in(0,\delta),$ there exist $x\in\overline{\X}_\uparrow$ and $y\in\underline{\X}_\downarrow,$ which may depend on $h,$ such that $\mathbf{u}_{\theta+h}(x)=\max_{z\in\X}\mathbf{u}_{\theta+h}(z),$ $\mathbf{u}_{\theta+h}(y)=\min_{z\in\X}\mathbf{u}_{\theta+h}(z),$ and for the same pair of $x\in\overline{\X}_\uparrow,$ $y\in\underline{\X}_\downarrow,$
\begin{align*}
&\abs*{\frac{\normo{\mathbf{u}_{\theta+h}}-\normo{\mathbf{u}_\theta}}{h}-(\overline{M}-\underline{M})}=\abs*{\frac{\mathbf{u}_{\theta+h}(x)-\mathbf{u}_{\theta+h}(y)-\left[\mathbf{u}_\theta(x)-\mathbf{u}_\theta(y)\right]}{h}-\left[\frac{d}{d\theta}\mathbf{u}_\theta(x)-\frac{d}{d\theta}\mathbf{u}_\theta(y)\right]}\nonumber\\
&\le\abs*{\frac{\mathbf{u}_{\theta+h}(x)-\mathbf{u}_\theta(x)}{h}-\frac{d}{d\theta}\mathbf{u}_\theta(x)}+\abs*{\frac{\mathbf{u}_{\theta+h}(y)-\mathbf{u}_\theta(y)}{h}-\frac{d}{d\theta}\mathbf{u}_\theta(y)}<\epsilon,
\end{align*}
which implies that $\frac{d}{d\theta^+}\normo{\mathbf{u}_\theta}$ exists and is equal to $=\overline{M}-\underline{M}.$ 
Therefore, for all $x\in\overline{\X}_\uparrow$ and all $y\in\underline{\X}_\downarrow,$
\[\abs*{\frac{d}{d\theta^+}\normo{\mathbf{u}_\theta}}=\abs*{\overline{M}-\underline{M}}=\abs*{\frac{d}{d\theta}\mathbf{u}_\theta(x)-\frac{d}{d\theta}\mathbf{u}_\theta(y)}\le\normo*{\frac{d}{d\theta}\mathbf{u}_\theta}. \qedhere\]

\end{document}